\newtheorem{lemma}{Lemma}
\newtheorem{theorem}{Theorem}
\newtheorem{remark}{Remark}
\newcommand{\K}{{\cal K}}
\newcommand{\tih}{\tilde{h}}
\newcommand{\talp}{\tilde{\alpha}}
\newcommand{\floor}[1]{\left\lfloor{#1}\right\rfloor}
\newcommand{\ceil}[1]{\left\lceil{#1}\right\rceil}
\date{}
\newcounter{num}
  \DeclareMathOperator{\Forb}{Forb}
\title{Large homogeneous subgraphs in bipartite graphs with forbidden induced subgraphs}
\author{Maria Axenovich\thanks{Karlsruhe Institute of Technology, Karlsruhe, Germany, \texttt{maria.aksenovich@kit.edu}.} \and Casey Tompkins\thanks{Karlsruhe Institute of Technology, Karlsruhe, Germany, \texttt{ctompkins496@gmail.com}.} \and Lea Weber\thanks{Karlsruhe Institute of Technology, Karlsruhe, Germany, \texttt{0815lea@gmx.net}.}}
\begin{document}

\maketitle

\abstract{For a bipartite graph $G$, let $\tih(G)$ be the largest $t$ such that  either $G$ contains $K_{t,t}$, a  complete bipartite subgraph with sides of size $t$ each or a bipartite complement of $G$ contains $K_{t,t}$. 
For a class $\cal{F}$, let  $\tih({\cal F})= \min\{\tih(G): G\in {\cal F}\}$. We say that a bipartite graph $H$ is strongly acyclic if neither $H$ nor its bipartite complement contain a cycle.  By $\Forb(n, H)$ we denote a set of 
bipartite graphs with parts of sizes $n$ each, that do not contain $H$ as an induced bipartite subgraph respecting the sides.
One can easily show that $\tih( \Forb(n,H))= O(n^{1-\epsilon})$ for a positive $\epsilon$ if $H$ is not strongly acyclic. 
Here, we prove that $\tih(\Forb(n, H))$ is linear in $n$ for all strongly acyclic graphs except for four graphs. }

%  This proves a conjecture of Kor\'andi, Pach, and Tomon for all but four graphs.

\section*{Introduction}

A conjecture of Erd\H{o}s and Hajnal \cite{EH} states that for any graph $H$ there is a constant $\epsilon>0$ such that any $n$-vertex graph that does not contain $H$ as an induced subgraph has either a clique or a coclique on at least $n^{\epsilon}$ vertices. 
While this conjecture remains open, see for example  a survey by Chudnovsky \cite{C}, we address the bipartite setting of the problem.\\

Let $G$ be a bipartite graph with parts $U,V$ of size $n$ each, we write $G=((U,V), E)$, and further write $E=E(G)$.
We shall often depict the sets $U$ and $V$  as  sets of points on two horizontal lines in the plane and call $U$ the top part and $V$ the bottom part. We say that a graph is the {\it bipartite complement } of $G$ if it has the same vertex set as $G$ and its edge set is $(U\times V)\setminus E$. We denote the bipartite complement of a graph $G$ by $G'$.
By $\tilde\omega(G)$ we denote the largest integer $t$ such that there are $A \subseteq U, B \subseteq V$ with $|A|= |B| = t$ and $ab \in E$ for all $a\in A, b\in B$, i.e.,  $A$ and $B$ form a {\it biclique}. 
By $\tilde\alpha(G)$ we denote the largest integer $t$ such that there are $A \subseteq U, B \subseteq V$ with $|A|= |B| = t$ and $ab \not\in E $ for all $a\in A, b\in B$, i.e., $A$ and $B$ form a {\it co-biclique}. 	Let $\tilde{h}(G) = \max \{\tilde{\alpha}(G), \tilde{\omega}(G)\}$.\\

For a bipartite graph $H=((U,V), E)$ and a bipartite graph $G=((A,B), E')$ we say that $H$ is an {\it induced 
bipartite subgraph } of $G$ {\it respecting sides} if $U\subseteq A$, $V\subseteq B$, and for any $u\in U, v\in V$, $uv\in E(H)$ if and only if $uv\in E(G)$. An induced subgraph $H^*$ of $G$ is a {\it copy } of $H$ in $G$ if $H^*$ is isomorphic to $H$ such that the isomorphic image of $U$ is contained in $A$ and the isomorphic image of $V$ is contained in $B$.
Let $\Forb(n, H)$ denote the set of  all bipartite graphs  with parts of size $n$ each that do not contain a copy of $H$ as an induced  bipartite subgraph respecting sides. We call a bipartite graph $H$-free if it does  not contain a copy of $H$.
Let 
\[\tih(n,H)= \tih(\Forb(n, H)) = \min\{\tih(G): ~G\in \Forb(n,H)\}.\]

It is implicit from a result shown by  Erd\H{o}s, Hajnal, and Pach \cite{EHP}, that for any bipartite $H$ with the smallest part of size $k$, that 
$\tih(n, H)=\Omega(n^{1/k})$.  A standard probabilistic argument shows that  if $H$ or its bipartite complement contains 
a cycle, then $\tih(n, H) = O(n^{1-\epsilon})$ for a positive $\epsilon$.  Here, we address the question of when $\tih(n,H)$ is linear in $n$.  We say that a bipartite graph $H$ is {\it strongly acyclic}  if neither $H$ nor its bipartite complement contain a cycle.  It is not difficult to show that  $\tih(n,H)$ could be linear only if $H$ is strongly acyclic. We show that for all but at most four strongly acyclic graphs $H$, $\tih(n,H)$ is linear in $n$.
Moreover, for several graphs $H$ we determine $\tih(n,H)$ exactly.

\begin{theorem}\label{main} There is a set ${\cal H}$  of at most four graphs such that for any strongly acyclic bipartite graph $H$,  such that neither $H$ nor $H'$ is in  ${\cal H}$, there is a positive constant $c=c(H)$ such that $\tih(n, H) \geq cn$.
\end{theorem}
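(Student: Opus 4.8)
The plan is to combine two soft reductions with a structural classification of strongly acyclic graphs, and then to check a short list of ``core'' forbidden graphs; the graphs that resist the analysis form $\mathcal H$.

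\textbf{Soft tools.} First I would record that $\tih(n,H)=\tih(n,H')=\tih(n,H^{T})$: indeed $G$ has an induced copy of $H$ respecting sides iff $G'$ has one of $H'$, and $\tih(G)=\tih(G')=\tih(G^{T})$, so one may treat $H$ up to bipartite complementation and up to swapping the two parts. The key lemma is a density statement: if $G=((U,V),E)$ with $|U|=|V|=n$ has at most $\Delta n$ edges then $\tilde\alpha(G)\ge n/(c\Delta)$ for an absolute constant $c$ --- discard the at most $n/2$ vertices of $V$ of degree above $4\Delta$, then on the remainder every $V$--vertex has degree at most $4\Delta$ and a greedy/first--moment choice yields a co--biclique of linear size --- and dually $e(G')\le\Delta n$ forces $\tilde\omega(G)\ge n/(c\Delta)$. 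Hence whenever $H$--freeness already forces $e(G)=O(n)$ or $e(G')=O(n)$ we are done, and the work is exactly the $H$ for which it does not.

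\textbf{Classification.} Since $H$ and $H'$ are forests, counting edges gives $(a-2)(b-2)\le 2$ for the part sizes $a,b$, and two isolated vertices on each side would put a $K_{2,2}$ into $H'$. From this I would extract the normal form: $H$ is obtained from a strongly acyclic $H_0$ without isolated vertices by adding at most one isolated vertex to each side (and only in restricted situations), where the core $H_0$ is, up to complement and swap of parts, either (i) a star $K_{1,m}$; or (ii) a \emph{double star}, i.e.\ two stars with centres on the same side sharing at most one leaf ($K_{1,p}\sqcup K_{1,q}$, or the tree obtained by identifying one leaf of each); or (iii) one of three sporadic graphs with both parts of size $\ge 3$: the paths $P_6$ and $P_7$, and the spider $S_{2,2,1}$ ($P_5$ with a pendant edge at its middle vertex). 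This is elementary but must be done carefully so as not to miss cases --- this is the source of the bound ``four''.

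\textbf{The cores.} For $H_0=K_{1,m}$ and its variants, $H$--freeness forces every $U$--vertex to have degree at most $m-1$ or (in the variants) degree $\le m-1$ or exactly $n$; peeling off the full--degree vertices either exposes a $K_{\ceil{n/2},\ceil{n/2}}$ or leaves an induced subgraph with $O(n)$ edges, and the density lemma finishes. For a double star, $H$--freeness reads: for all $u_1,u_2\in U$, $\abs{N(u_1)\setminus N(u_2)}<p$ or $\abs{N(u_2)\setminus N(u_1)}<q$ (plus a mild extra clause if the leaf is shared); i.e.\ the family of top neighbourhoods is ``$q$--near--nested or $p$--close'' between any two members. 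With $u^{*}$ of maximum degree $D$: if $D\le n/2$ then every top neighbourhood sits inside $N(u^{*})$ up to fewer than $p$ vertices, so $G[U,V\setminus N(u^{*})]$ is sparse and yields a linear co--biclique; if $D>n/2$ then a Dilworth--type argument on $\{N(u)\}$ (ordered by $q$--near--containment) produces either a linear chain $N(u_1)\subseteq_q\cdots\subseteq_q N(u_s)$ --- in which case a prefix $\{u_1,\dots,u_t\}$ together with $V$ minus the (small) union of their neighbourhoods is a linear co--biclique --- or a linear ``$p$--close'' cluster, whose neighbourhoods all agree up to $O(1)$ perturbations, and then, according as their common size is large or small, a random subset of a fixed neighbourhood lies in a linear fraction of them (a linear biclique) or a restricted subgraph is sparse (a linear homogeneous set of the appropriate type). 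Finally the three sporadic cores and their complements would be handled by direct analysis of $H$--free bipartite graphs; those among them --- at most four --- for which this does not produce a linear homogeneous set are put into $\mathcal H$.

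\textbf{Main obstacle.} The delicate points, which I expect to dominate the work, are: making the $D>n/2$ double--star analysis lose only a constant factor depending on $H$ (a naive induction into an unbalanced induced subgraph loses a factor at every step and so fails); and the ad hoc structural arguments for the sporadic cores $P_6$, $P_7$, $S_{2,2,1}$ and their complements, a bounded number of which resist and form $\mathcal H$.
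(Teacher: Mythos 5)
Your overall route is the paper's route (classify the strongly acyclic graphs; handle one-centre graphs by a max-degree/sparsity lemma; handle two-centre graphs via the structure of the top neighbourhoods; put a bounded number of sporadic graphs into $\mathcal H$), but as written it has two genuine gaps. The first is in the classification: up to bipartite complementation there are \emph{four} sporadic strongly acyclic graphs with both parts of size at least $3$, not three. Besides $P_6$, $P_7$ and the complementary pair consisting of $P_5$ plus an isolated vertex on the side of its two centres and the spider with legs $2,2,1$ (the paper's $P_5'$ and its complement), there is the graph $H^1_{3,4}$ with parts of sizes $3$ and $4$ (the spider with legs $3,2,1$, which is bipartite self-complementary). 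It has no isolated vertices, is not a star or a double star, and is not on your sporadic list, so in your scheme it is neither shown to satisfy $\tih(n,H)\geq cn$ nor placed in $\mathcal H$; the theorem is therefore not established for it. Relatedly, your normal form files $P_5'$ (that is, $M_{1,1}$ plus an isolated vertex on the centres' side) under ``double star with an isolated vertex added''; forbidding this larger graph is a strictly weaker hypothesis than forbidding $M_{1,1}$, no monotonicity reduction applies, and in the paper this graph is one of the four exceptions -- so your double-star analysis cannot be read as covering it, and it must go into $\mathcal H$ explicitly.

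The second and more serious gap is the shared-leaf double star itself ($M_{s_1,s_2}$ and $M^*_{s_1,s_2}$), i.e.\ the paper's Lemma \ref{2}, which is the technical heart of the theorem; your sketch for the regime $D>n/2$ does not go through as stated. ``$q$-near-containment'' is not transitive: along a chain the errors accumulate, so for a prefix $u_1,\dots,u_t$ of a linear chain one only controls $\abs{\bigcup_{i\le t} N(u_i)}$ up to an additive $\Omega(qt^2)$, and since the degrees in this regime may all exceed $n/2$, the set $V\setminus\bigcup_{i\le t}N(u_i)$ can be empty -- the claimed linear co-biclique does not follow. Similarly, pairwise ``$p$-closeness'' across a linear-sized cluster gives agreement only up to $p$ per pair and does not yield a common core, and Dilworth-type decompositions are not available for such non-transitive relations; making them quantitative is exactly the difficulty. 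The paper's solution is structural: after discarding the vertices of degree at most $6s$ and bucketing $U$ by degree into blobs of width $2s$, it forms the auxiliary graph $I$ on $U$ joining two vertices when their neighbourhoods intersect, proves (Claims 0--5) that every component of $I$ is the closure of a rooted tree, and then applies Lemma \ref{trees} (every $n$-vertex rooted tree has height at least $n/4$ or two independent subforests of size at least $n/4$ each) to extract from each component either a large clique of $I$ -- which is handled through Lemma \ref{1}, since a clique whose vertices share a neighbour cannot contain an induced $2K_{1,s}$ in $G$ without creating $M_s$ -- or a large co-biclique; $M^*_s$ is then reduced to $M_s$ by deleting one low-degree bottom vertex (in $G$ or $G'$). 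Some replacement for this tree-closure argument, or a corrected quantitative version of your chain/cluster dichotomy, is required; without it the case that carries the theorem is missing.
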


The set ${\cal H}$ is given in Figure \ref{acyclic-rest}.\\

Note that the notion of large bicliques and co-bicliques in ordered bipartite graphs  with forbidden induced subgraphs corresponds to the notion of submatrices of all $0$'s or of all $1$'s in binary matrices with forbidden submatrices. A paper of  Kor\'andi, Pach, and Tomon   \cite{KPT}  addresses a similar question for  matrices. In addition, one could interpret bipartite graphs as set systems consisting of all the neighborhoods of vertices from one part. Structural properties of these graphs  in terms of VC-dimension of the respective system and connection to Erd\H{o}s-Hajnal conjecture are addressed for example by Fox, Pach, and Suk \cite{FPS}.\\

The paper is structured as follows. In Section \ref{general} we show the general bounds for completeness.
In Section \ref{acyclic} we characterize all strongly acyclic bipartite graphs.
In Section \ref{proof} we find linear lower bounds on $\tih(n,H)$ for each of the strongly acyclic graphs with few exceptions, thus proving Theorem \ref{main}. Finally, in Section \ref{small} we determine the constant $c$ in Theorem \ref{main} exactly for forbidden bipartite graphs with two vertices in each part.

%%%%%%%%%%%%%%%%%%%%%%%%%%%%%%%%%%%%%%%%%%%%%%%%%%
%%%%%%%%%%%%%%%%%%%%%%%%%%%%%%%%%%%%%%%%%%%%%%%%%%
%%%%%
%%%%%
\section{Characterization of all strongly acyclic graphs}\label{acyclic}
%%%%%
%%%%%
%%%%%%%%%%%%%%%%%%%%%%%%%%%%%%%%%%%%%%%%%%%%%%%%%%
%%%%%%%%%%%%%%%%%%%%%%%%%%%%%%%%%%%%%%%%%%%%%%%%%%

In this section we list all strongly acyclic bipartite graphs up to a bipartite complement.

\begin{figure}[H] 
	\centering
	\begin{minipage}{0.20\linewidth}
		\centering
		\begin{tikzpicture}[scale=0.45]
		\node[draw, circle, inner sep=2pt] (1) at (0,2.5){};
		\node[draw, circle, inner sep=2pt] (2) at (2,2.5){};
		\node[draw, circle, inner sep=2pt] (3) at (4,2.5){};
		
		\node[draw, circle, inner sep=2pt] (4) at (0,0){};
		\node[draw, circle, inner sep=2pt] (5) at (2,0){};
		\node[draw, circle, inner sep=2pt] (6) at (4,0){};
		
		\draw[thick] (4)--(1)--(5)--(2)--(6);
		\end{tikzpicture}
		\caption*{$P_5'$}
	\end{minipage}
	\begin{minipage}{0.20\linewidth}
		\centering
		\begin{tikzpicture}[scale=0.45]
		\node[draw, circle, inner sep=2pt] (1) at (0,2.5){};
		\node[draw, circle, inner sep=2pt] (2) at (2,2.5){};
		\node[draw, circle, inner sep=2pt] (3) at (4,2.5){};
		
		\node[draw, circle, inner sep=2pt] (4) at (0,0){};
		\node[draw, circle, inner sep=2pt] (5) at (2,0){};
		\node[draw, circle, inner sep=2pt] (6) at (4,0){};
		
		\draw[thick] (1)--(4)--(2)--(5)--(3)--(6);
		\end{tikzpicture}
		\caption*{$P_6$}
	\end{minipage}
	\begin{minipage}{0.22\linewidth}
		\centering
		\begin{tikzpicture}[scale=0.45]
		\node[draw, circle, inner sep=2pt] (1) at (1,2.5){};
		\node[draw, circle, inner sep=2pt] (2) at (3.5,2.5){};
		\node[draw, circle, inner sep=2pt] (3) at (6,2.5){};
		
		\node[draw, circle, inner sep=2pt] (4) at (1,0){};
		\node[draw, circle, inner sep=2pt] (5) at (2.25,0){};
		\node[draw, circle, inner sep=2pt] (6) at (3.5,0){};
		\node[draw, circle, inner sep=2pt] (7) at (6,0){};
		
		\draw[thick] (1)--(4)--(2)--(6)--(3)--(7) (2)--(5) ;
		\end{tikzpicture}
		\caption*{$H_{3,4}^1$}
	\end{minipage}
	\begin{minipage}{0.22\linewidth}
		\centering
		\begin{tikzpicture}[scale=0.45]
		\node[draw, circle, inner sep=2pt] (1) at (1,2.5){};
		\node[draw, circle, inner sep=2pt] (2) at (3,2.5){};
		\node[draw, circle, inner sep=2pt] (3) at (5,2.5){};
		
		\node[draw, circle, inner sep=2pt] (4) at (0,0){};
		\node[draw, circle, inner sep=2pt] (5) at (2,0){};
		\node[draw, circle, inner sep=2pt] (6) at (4,0){};
		\node[draw, circle, inner sep=2pt] (7) at (6,0){};
		
		\draw[thick] (4)--(1)--(5)--(2)--(6)--(3)--(7);
		\end{tikzpicture}
		\caption*{$P_7$}
	\end{minipage}
	\caption{Strongly acyclic subgraphs with parts of sizes at least $3$ }\label{acyclic-rest}
	\end{figure}

\begin{figure}[H]

	\centering
	\begin{tikzpicture}[scale=0.45]
		\node[draw, circle, inner sep=1.5pt] (1) at (2,4){};
		\node[draw, circle, inner sep=1.5pt] (2) at (7,4){};
		\node[draw, circle, inner sep=1.5pt] (3) at (-1,0){};
		\node[draw, circle, inner sep=1.5pt] (4) at (0,0){};
		\node[draw, circle, inner sep = 1.5pt] (5) at (3,0){};
		\node[draw, circle, inner sep = 0pt] at (1,0){};
		\node[draw, circle, inner sep = 0pt] at (1.5,0){};
		\node[draw, circle, inner sep = 0pt] at (2,0){};
		
		\node[draw, circle, inner sep=1.5pt] (13) at (5,0){};
		\node[draw, circle, inner sep=1.5pt] (14) at (6,0){};
		\node[draw, circle, inner sep = 1.5pt] (15) at (9,0){};
		\node[draw, circle, inner sep = 0pt] at (7,0){};
		\node[draw, circle, inner sep = 0pt] at (7.5,0){};
		\node[draw, circle, inner sep = 0pt] at (8,0){};
		
		\node[] (7) at (-1.5,0){};
		\node[] (8) at (3.5,0){};
		\draw [thick, decoration={brace, mirror, raise=0.3cm}, decorate] (7)--(8);
		\node [] at (1,-1.5) {$s_1$}; 
		
		\node[] (9) at (4.5,0){};
		\node[] (10) at (9.5,0){};
		\draw [thick, decoration={brace, mirror, raise=0.3cm}, decorate] (9)--(10);
		\node [] at (7,-1.5) {$s_2$}; 
		
		\draw[thick] (3)--(1)--(4) (1)--(5);
		\draw[thick] (13)--(2)--(14) (2)--(15);	
	\end{tikzpicture}
	\caption*{$H_{s_1,s_2}$}

\begin{minipage}{0.45\linewidth}
	\centering
	\begin{tikzpicture}[scale=0.45]
		\node[draw, circle, inner sep=1.5pt] (1) at (2.5,4){}; 
		\node[draw, circle, inner sep=1.5pt] (2) at (6.5,4){};
		
		\node[draw, circle, inner sep=1.5pt] (3) at (-1,0){};
		\node[draw, circle, inner sep=1.5pt] (4) at (0,0){};
		\node[draw, circle, inner sep = 1.5pt] (5) at (3,0){};
		\node[draw, circle, inner sep = 0pt] at (1,0){};
		\node[draw, circle, inner sep = 0pt] at (1.5,0){};
		\node[draw, circle, inner sep = 0pt] at (2,0){};
		
		\node[draw, circle, inner sep=1.5pt] (13) at (6,0){};
		\node[draw, circle, inner sep=1.5pt] (14) at (7,0){};
		\node[draw, circle, inner sep = 1.5pt] (15) at (10,0){};
		\node[draw, circle, inner sep = 0pt] at (8,0){};
		\node[draw, circle, inner sep = 0pt] at (8.5,0){};
		\node[draw, circle, inner sep = 0pt] at (9,0){};
		
		\node[draw, circle, inner sep=1.5pt] (11) at (4.5,0){};
		\draw[thick] (1)--(11)--(2);
		
		\node[] (7) at (-1.5,0){};
		\node[] (8) at (3.5,0){};
		\draw [thick, decoration={brace, mirror, raise=0.3cm}, decorate] (7)--(8);
		\node [] at (1,-1.5) {$s_1$}; 
		
		\node[] (9) at (5.5,0){};
		\node[] (10) at (10.5,0){};
		\draw [thick, decoration={brace, mirror, raise=0.3cm}, decorate] (9)--(10);
		\node [] at (8,-1.5) {$s_2$}; 
		
		\draw[thick] (3)--(1)--(4) (1)--(5);
		\draw[thick] (13)--(2)--(14) (2)--(15);	
	\end{tikzpicture}
	\caption*{$M_{s_1,s_2}$}
\end{minipage}
\begin{minipage}{0.4\linewidth}
	\centering
	\begin{tikzpicture}[scale=0.5]
	\node[draw, circle, inner sep=1.5pt] (1) at (2.5,4){}; 
	\node[draw, circle, inner sep=1.5pt] (2) at (6.5,4){};
	
	\node[draw, circle, inner sep=1.5pt] (3) at (-1,0){};
	\node[draw, circle, inner sep=1.5pt] (4) at (0,0){};
	\node[draw, circle, inner sep = 1.5pt] (5) at (3,0){};
	\node[draw, circle, inner sep = 0pt] at (1,0){};
	\node[draw, circle, inner sep = 0pt] at (1.5,0){};
	\node[draw, circle, inner sep = 0pt] at (2,0){};
	
	\node[draw, circle, inner sep=1.5pt] (13) at (6,0){};
	\node[draw, circle, inner sep=1.5pt] (14) at (7,0){};
	\node[draw, circle, inner sep = 1.5pt] (15) at (10,0){};
	\node[draw, circle, inner sep = 0pt] at (8,0){};
	\node[draw, circle, inner sep = 0pt] at (8.5,0){};
	\node[draw, circle, inner sep = 0pt] at (9,0){};
	
	\node[draw, circle, inner sep=1.5pt] (11) at (4.5,0){};
	\draw[thick] (1)--(11)--(2);
	
	\node[] (7) at (-1.5,0){};
	\node[] (8) at (3.5,0){};
	\draw [thick, decoration={brace, mirror, raise=0.3cm}, decorate] (7)--(8);
	\node [] at (1,-1.5) {$s_1$}; 
	
	\node[] (9) at (5.5,0){};
	\node[] (10) at (10.5,0){};
	\draw [thick, decoration={brace, mirror, raise=0.3cm}, decorate] (9)--(10);
	\node [] at (8,-1.5) {$s_2$}; 
	
	\draw[thick] (3)--(1)--(4) (1)--(5);
	\draw[thick] (13)--(2)--(14) (2)--(15);	
	
	\node[draw, circle, inner sep=1.5pt] (12) at (11,0){};
	\end{tikzpicture}
	\caption*{ $M_{s_1,s_2}^*$}
\end{minipage}
\caption{Strongly acyclic bipartite graphs with one part of size $2$} \label{size2}
\end{figure}

Let  ${\cal M} =   \{H_{s_1,s_2}, M_{s_1,s_2}, M^*_{s_1,s_2}:  ~ s_1, s_2 \geq 0\} $ and  ${\cal H} = \{P_5', P_6, H_{3,4}^1, P_7\}$ be the set of graphs shown in Figures \ref{size2} and  \ref{acyclic-rest}. We denote  a cycle of length $i$ by $C_i$, a path on $i$ vertices by $P_i$,  and a complete bipartite graph with parts of sizes $s$ and $t$, $K_{s,t}$.

\begin{theorem}\label{characterization}
Let $H$ be a strongly acyclic bipartite graph.  Then either one of its parts has size $1$ or   one of its parts has size at most $3$ and  at least one of $H$ or its bipartite complement is in ${\cal M} \cup {\cal H}$. If $H$ is not a strongly acyclic bipartite graph, then $H$ or its bipartite complement contain $C_4$, $C_6$, or $C_8$.
\end{theorem}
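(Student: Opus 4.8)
\medskip
\noindent\emph{Proof plan.} The plan is to recast strong acyclicity as a statement about forests, use edge counting to bound the sizes of the two parts, and then settle the handful of remaining cases by direct enumeration. Since a bipartite graph contains no cycle exactly when it is a forest, $H$ is strongly acyclic if and only if both $H$ and its bipartite complement $H'$ are forests. Writing $m=|U|$ and $n=|V|$, a forest on $m+n$ vertices has at most $m+n-1$ edges, so $|E(H)|\le m+n-1$ and $|E(H')|=mn-|E(H)|\le m+n-1$; combining these gives $(m-1)(n-1)\le |E(H)|\le m+n-1$, which rearranges to $(m-2)(n-2)\le 2$. Hence $\min\{m,n\}\le 3$, and if $\min\{m,n\}\ge 3$ then $\{m,n\}\in\{\{3,3\},\{3,4\}\}$, so it remains to handle $\min\{m,n\}\in\{1,2,3\}$.

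If one part is a single vertex, $H$ is a disjoint union of a star and isolated vertices and so is $H'$; all such graphs are strongly acyclic, which is the first alternative. For $\min\{m,n\}=2$, say $U=\{t_1,t_2\}$, I would classify each vertex of $V$ by its neighbourhood inside $\{t_1,t_2\}$ and let $a,\gamma$ be the numbers of vertices adjacent to both of $t_1,t_2$, respectively to neither. Every cycle of $H$ uses $t_1$, $t_2$ and two vertices of $V$ joined to both, so $H$ is a forest precisely when $a\le 1$; since the vertices of $V$ joined to both of $t_1,t_2$ in $H'$ are exactly those joined to neither in $H$, likewise $H'$ is a forest precisely when $\gamma\le 1$. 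A short inspection of the four cases $a,\gamma\in\{0,1\}$ then identifies these graphs, up to isomorphism and up to exchanging $H$ with $H'$, as $H_{s_1,s_2}$ ($a=\gamma=0$), $M_{s_1,s_2}$ ($a=1,\gamma=0$) and $M^*_{s_1,s_2}$ ($a=\gamma=1$), the case $a=0,\gamma=1$ having $H'\in\mathcal{M}$; so $H$ or $H'$ lies in $\mathcal{M}$.

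The case $\min\{m,n\}=3$ is finite. If $\{m,n\}=\{3,3\}$ then $|E(H)|+|E(H')|=9$ with neither term above $5$, so $\{|E(H)|,|E(H')|\}=\{4,5\}$; after possibly replacing $H$ by $H'$ I may assume $|E(H)|=4$ and enumerate the cycle-free bipartite graphs with both parts of size $3$ and exactly $4$ edges --- equivalently, the ways to split the six vertices into two trees --- keeping those whose complement is also cycle-free. Up to isomorphism and complementation only $P_5'$ and $P_6$ survive. If $\{m,n\}=\{3,4\}$ then $|E(H)|=|E(H')|=6$, so $H$ and $H'$ are both spanning trees of $K_{3,4}$; enumerating the bipartite trees on parts of sizes $3$ and $4$ by degree sequence and keeping those with acyclic complement leaves exactly $H_{3,4}^1$ and $P_7$. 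Together with the previous paragraph this proves the first two sentences and confirms $\mathcal{H}=\{P_5',P_6,H_{3,4}^1,P_7\}$.

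For the last sentence, suppose $H$ is not strongly acyclic, so some $F\in\{H,H'\}$ contains a cycle; let $C$ be a shortest one, of length $2g$. A shortest cycle is chordless, so $F$ induces exactly $C_{2g}$ on $V(C)$. If $g\le 4$ then $F$, hence $H$ or $H'$, contains $C_4$, $C_6$ or $C_8$. If $g\ge 5$, choose two vertices $x_1,x_2$ of $C$ in a common part at cycle-distance $2$; their neighbourhoods along $C$ overlap in a single vertex, so in the bipartite complement of $C_{2g}$ they have a common neighbourhood of size $g-3\ge 2$, yielding a $C_4$ in $F'$, which is again one of $H,H'$. The only genuinely laborious step is the enumeration for $\min\{m,n\}=3$ (together with the ``short inspection'' for $\min\{m,n\}=2$): the four graphs of $\mathcal{H}$ and the three families in $\mathcal{M}$ arise precisely as the bipartite forests whose bipartite complement stays acyclic, and the care needed is to list them all up to isomorphism and to check each complement.
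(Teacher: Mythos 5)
Your proposal is correct and takes essentially the same route as the paper: both $H$ and $H'$ being forests gives the edge count forcing $(m-2)(n-2)\le 2$, the two-row case is settled by counting common neighbours/non-neighbours of the two top vertices (yielding $\mathcal{M}$), and the $3\times 3$ and $3\times 4$ cases are finished by the same finite spanning-forest/spanning-tree check (yielding $\mathcal{H}$). The only genuine deviation is the last sentence, where your shortest-cycle argument (an induced $C_{2g}$ with $g\ge 5$ forces two vertices at cycle-distance $2$ to have $g-3\ge 2$ common neighbours in the bipartite complement, hence a $C_4$) is a complete and slightly cleaner substitute for the paper's brief ``both parts of size at least $5$ easily give a $C_4$'' remark.
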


\begin{proof}
Let $H$ be  a bipartite graph with the 
top part $U = \{u_1,\ldots,u_k\}$ and the bottom part $V= \{v_1,\ldots,v_l\}$, where $2 \le k \le l$. Denote by $H'$ the bipartite complement of $H$ and $d(u)$ a degree of a vertex $u$ in $H$.\\

Assume that $k=2$. Consider $u_1$, $u_2$ and their neighborhoods. We see that these neighborhoods share at most one vertex, otherwise we have a cycle of length four. The same holds for the bipartite complement of $H$.
Thus the only possibilities for $H$ or $H'$ are exactly graphs from ${\cal M}$ as shown in Figure \ref{size2}.\\

Now let $k\geq 3$.\\
Since $H$ and $H'$ are acyclic, the number of edges in $H$ and $H'$ is at most $|U|+|V|-1$, i.e., the total number of edges in these two graphs is at most $2(|U|+|V|-1)$. On the other hand, this number is $|U||V|$.  
We see however, that if $|U|,|V|= 4$, then $|U||V|> 2(|U|+|V|-1)$. Similarly, if $|U|=3$ and $|V|= 5$, we have that $|U||V|> 2(|U|+|V|-1)$. 
Thus, $k=|U|=3$ and $|V| \leq 4$.\\

Let $|U|=3$ and $|V|=3$. 
If there is a vertex from $U$ of degree $0$,  say $d(u_2) = 0$, then $d(u_1), d(u_3) \ge 2$, otherwise there is a $C_4$ in $H'$. Moreover we must  have $|N(u_1) \cap N(u_3)|\le 1$. Thus  $H = P_5'$. 
By considering $H'$, we can assume that  no vertex in $U$ has degree $3$.  Thus all vertices of $H$ have degrees $1$ or $2$ and the number of edges is $3$, $4$, $5$, or $6$. Since $H$ is strongly acyclic, there could be at most $5$ and at least $4$  edges. 
So, up to bipartite complementation we can assume that there are $4$ edges in $H$ with respective degrees $1$, $1$, and $2$ in both parts. This is only possible when $H$ is a disjoint union of $K_2$ and $P_4$, whose bipartite complement is $P_6$.\\

Let $|U|=3$ and  $|V| = 4$.
Assume there is a vertex $u \in U$ with $d(u) = 4$. Then any two other vertices in $U$ have $3$ non-neighbors in $N(u)$ each, and thus, at least two common non-neighbors, resulting in $C_4$ in $H'$.  By considering $H'$, we see that there are no vertices of $U$ of degree $0$.  I.e., the degrees of vertices from $U$ could be $1, 2$, or $3$.  Since $H$ is strongly acyclic, the number of edges is at most $6$ and at least $12-6=6$.  So, $H$ has $6$ edges, and degrees of vertices in $U$ are $1,2,3$ or $2,2,2$.
In case of degrees $1,2,3$ we see that the neighborhoods of degree $2$ and $3$ vertices intersect in exactly one vertex. The vertex of degree $1$ must  be adjacent to a neighbor of degree $3$ vertex that is not adjacent to degree $2$ vertex, otherwise there is a $C_4$ in the bipartite complement of $H$. Thus we have that $H=H_{3,4}^1$.
If the degrees of vertices in $U$ are $2,2,2$, then the only option is $P_7$.\\

We only need to show that any bipartite graph $H$  that is not strongly acyclic, contains $C_4, C_6$, or $C_8$ in it or its bipartite complement. If $H$ has one part of size at most $4$, we are done, since any cycle in $H$ or $H'$ has length at most $8$.
If $H$ has both parts of sizes at least $5$, one can easily verify that either $H$ or $H'$ contains a $C_4$.  \end{proof}

%%%%%%%%%%%%%%%%%%%%%%%%%%%%%%%%%%%%%%%%%%%%%%%%%%
%%%%%%%%%%%%%%%%%%%%%%%%%%%%%%%%%%%%%%%%%%%%%%%%%%
%%%%%
%%%%%
\section{Proof of Theorem \ref{main}}\label{proof}
%%%%%
%%%%%
%%%%%%%%%%%%%%%%%%%%%%%%%%%%%%%%%%%%%%%%%%%%%%%%%%
%%%%%%%%%%%%%%%%%%%%%%%%%%%%%%%%%%%%%%%%%%%%%%%%%%

Let $H_s = H_{s,s}$, $M_s=M_{s,s}$ and $M^*_s = M^*_{s,s}$.  In the following results we observe that if $H$ is an induced bipartite subgraph of $K$ respecting sides, then $\tih(n, H)\geq \tih(n, K)$. We omit ceilings and floors where it is not essential.

\begin{lemma}\label{maxdegree}
Let $G$ be a bipartite graph with parts $U$ and $V$ of sizes $n$ each and  degrees of vertices from $U$  less that $s$. Then $\widetilde{\alpha}(G) \geq n/s$.
\end{lemma}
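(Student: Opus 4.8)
The plan is a one-shot counting (greedy) argument. We want to exhibit sets $A\subseteq U$ and $B\subseteq V$ with $|A|=|B|=t$ for some $t\ge n/s$ such that no edge of $G$ joins $A$ to $B$; such a pair is by definition a co-biclique, so producing it gives $\widetilde{\alpha}(G)\ge t\ge n/s$.

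First I would set $t=\lfloor n/s\rfloor$ (which the paper's convention lets us simply write as $n/s$) and choose $A$ to be an \emph{arbitrary} subset of $U$ of size $t$; this is possible since $t\le n\le |U|$. Next I would bound the number of vertices of $V$ that are ``blocked'' by $A$: since every vertex of $U$, in particular every vertex of $A$, has fewer than $s$ neighbours, the neighbourhood $N(A)=\bigcup_{u\in A}N(u)$ satisfies $|N(A)|\le t(s-1)$. Consequently
\[
|V\setminus N(A)|\ \ge\ n-t(s-1)\ \ge\ n-\frac{n}{s}(s-1)\ =\ \frac{n}{s}\ \ge\ t .
\]
Then I would pick any $B\subseteq V\setminus N(A)$ with $|B|=t$. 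By construction no vertex of $A$ has a neighbour in $B$, so $(A,B)$ is a co-biclique with parts of size $t\ge n/s$, and hence $\widetilde{\alpha}(G)\ge n/s$, as claimed. (Equivalently, one could phrase this as a greedy deletion process, repeatedly moving one vertex of $U$ into $A$ and deleting its $<s$ neighbours from $V$, stopping after $t$ steps; the count is the same.)

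I do not expect a genuine obstacle here: the statement follows from the trivial observation that $t$ low-degree vertices block only about $ts$ vertices on the other side. The only point requiring a little care is the floor/ceiling bookkeeping in the inequality $n-t(s-1)\ge t$, i.e.\ the choice of the largest admissible $t$; this is exactly the kind of rounding the paper has declared it will suppress, so no further work is needed.
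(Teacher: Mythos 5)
Your argument is correct and coincides with the paper's proof: both take an (arbitrary) set of $n/s$ vertices of $U$, bound its neighbourhood by $(s-1)n/s$, and take the complementary part of $V$ as the other side of the co-biclique. The only difference is your explicit floor/ceiling bookkeeping, which the paper suppresses by convention.
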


\begin{proof}
Let $U'$ be a subset of $n/s$ vertices of $U$. Then $|N(U')|\leq (s-1)n/s = n - n/s$.  Let $V' = V- N(U')$, we have $|V'|\geq n/s$ and $(U', V')$ form a co-biclique.
\end{proof}

\subsection{Forbidden $H_s$}

\begin{lemma}\label{1} Let $s_1\geq s_2> 0$.  Then $\tilde h(n, H_{s_1,s_2}) \ge  \tih(n, H_{s_1}) \geq \frac{n}{2s_1}.$
\end{lemma}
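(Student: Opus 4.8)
The plan is to prove the two inequalities in turn. For $\tih(n, H_{s_1,s_2}) \ge \tih(n, H_{s_1})$ I would simply note that, since $s_1 \ge s_2 > 0$, the graph $H_{s_1,s_2}$ is obtained from $H_{s_1}=H_{s_1,s_1}$ by deleting $s_1-s_2$ of the leaves at one of the two centres; hence $H_{s_1,s_2}$ is an induced bipartite subgraph of $H_{s_1}$ respecting sides, so $\Forb(n,H_{s_1,s_2})\subseteq\Forb(n,H_{s_1})$ and therefore $\tih(n,H_{s_1,s_2})\ge\tih(n,H_{s_1})$ --- this is exactly the monotonicity observation recorded just before Lemma~\ref{maxdegree}.

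For the bound $\tih(n,H_{s_1})\ge n/(2s_1)$, fix $G=((U,V),E)\in\Forb(n,H_{s_1})$ and write $s=s_1$. The first step is to translate $H_s$-freeness into a condition on neighbourhoods: a copy of $H_s$ in $G$ respecting sides is exactly a pair of distinct $a,b\in U$ together with sets $A\subseteq N(a)\setminus N(b)$ and $B\subseteq N(b)\setminus N(a)$, each of size $s$ (such $A$ and $B$ are then automatically disjoint), so $G$ is $H_s$-free if and only if for every pair of distinct $a,b\in U$ at least one of $|N(a)\setminus N(b)|$, $|N(b)\setminus N(a)|$ is at most $s-1$. The key step is then to order $U=\{u_1,\dots,u_n\}$ by nondecreasing degree. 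Because $|N(u_i)\setminus N(u_j)|-|N(u_j)\setminus N(u_i)|=d(u_i)-d(u_j)\le 0$ whenever $i<j$, the smaller of the two set differences in the $H_s$-free condition is always $|N(u_i)\setminus N(u_j)|$; hence, after this reordering, $|N(u_i)\setminus N(u_j)|\le s-1$ for \emph{all} $i<j$.

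With this in hand I would set $m=\lfloor n/(2s)\rfloor$ and consider two candidate homogeneous pairs. Taking $U'=\{u_1,\dots,u_m\}$, every $N(u_i)$ with $i\le m$ is contained in $N(u_m)\cup\bigl(N(u_i)\setminus N(u_m)\bigr)$, so $|N(U')|\le d(u_m)+(m-1)(s-1)$; if $d(u_m)\le n-m-(m-1)(s-1)$ this leaves at least $m$ vertices of $V$ nonadjacent to all of $U'$, i.e.\ a co-biclique of size $m$. Taking $U''=\{u_{n-m+1},\dots,u_n\}$, the common neighbourhood $W=\bigcap_{i\ge n-m+1}N(u_i)$ satisfies $N(u_{n-m+1})\setminus W=\bigcup_{j>n-m+1}\bigl(N(u_{n-m+1})\setminus N(u_j)\bigr)$, so $|W|\ge d(u_{n-m+1})-(m-1)(s-1)$; if $d(u_{n-m+1})\ge m+(m-1)(s-1)$ this gives a biclique of size $m$. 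Finally one checks that these two conditions cannot both fail: if the first fails then $d(u_m)>n-m-(m-1)(s-1)$, and since $m\le n-m+1$ we have $d(u_{n-m+1})\ge d(u_m)$; a one-line computation shows that $n-m-(m-1)(s-1)\ge m+(m-1)(s-1)$ is equivalent to $n\ge 2ms-2s+2$, which holds for $m=\lfloor n/(2s)\rfloor$, so the second condition holds. Either way $G$ has a homogeneous pair of size $m\ge n/(2s_1)$ (suppressing the floor, as the paper does). The only point that needs care is lining up the two numerical thresholds so that the constant comes out as exactly $1/(2s_1)$ rather than something smaller, and this is precisely why one wants the sharp bound $|N(u_i)\setminus N(u_j)|\le s-1$ coming from the degree ordering instead of the weaker $<s$; the rest is routine bookkeeping.
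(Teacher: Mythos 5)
Your proposal is correct, and it proves both inequalities: the first one via exactly the monotonicity observation the paper records before Lemma \ref{maxdegree}, and the second via the same key structural step the paper uses, namely ordering $U$ by nondecreasing degree so that $H_{s_1}$-freeness forces $|N(u_i)\setminus N(u_j)|\le s_1-1$ for all $i<j$. Where you differ is in how this is converted into a large homogeneous pair. The paper pivots on the single median-degree vertex $u_{n/2}$: it uses the fact that $H_{s_1}$ is isomorphic to its bipartite complement to assume $d(u_{n/2})\le n/2$, notes that every earlier vertex meets $Y=V\setminus N(u_{n/2})$ in at most $s_1-1$ vertices, and then invokes Lemma \ref{maxdegree} to extract a co-biclique of size $n/(2s_1)$. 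You avoid both the complementation trick and Lemma \ref{maxdegree}, instead running a degree-threshold dichotomy at the two ends of the ordering with $m=\lfloor n/(2s_1)\rfloor$: if $d(u_m)$ is below the threshold, the bottom $m$ vertices have a small neighborhood union and give a co-biclique; otherwise $d(u_{n-m+1})\ge d(u_m)$ exceeds the complementary threshold and the top $m$ vertices have a large common neighborhood, giving a biclique, the two thresholds being compatible precisely because $n\ge 2ms_1-2s_1+2$. Your route is a bit longer in bookkeeping but does not rely on $H_{s_1}$ being bipartite self-complementary, while the paper's version is shorter because one pivot vertex plus complementation absorbs the case analysis; the constants agree, and your suppression of floors matches the paper's stated convention.
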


\begin{proof}[Proof of Lemma \ref{1}]
Let $H=H_s$, for $s = \max\{s_1,s_2\}$. Let $G$ be an $H$-free  bipartite graph  with top part $U$ and bottom part $V$, both of size $n$.  We will show that $\tilde h(G) \ge \frac n{2s} $. Let $\{u_1,\ldots,u_n\}$ be an ordering of the vertices of $U$, s.t. $d(u_i) \le d(u_j)$ if $i < j$. Since $H$ is isomorphic to its bipartite complement, we can assume that  $d(u_{n/2}) \le \frac n2$.
	Assume first that  there is an $i < \frac n2$ with $|N(u_i) \setminus N(u_{n/2})| \ge s$. Then we have a set $V'$ of $s$ vertices, $V' \subseteq  N(u_i)\setminus N(u_{n/2})$, and since $|N(u_i)| \le |N(u_{n/2})|$, we have also a set of $s$ vertices $V''$,  $V''\subseteq  N(u_{n/2})\setminus N(u_i) $. But then $\{u_i, u_{n/2}\}$ and $V'\cup V''$ induce $H$.  Let $Y = V \setminus N(u_{n/2})$. We have $|Y| \ge \frac n2$ and by the above argument, we have $|Y \cap N(u_i)| \le s-1$, for all $i \in [\frac n2]$. Applying Lemma \ref{maxdegree} to a subgraph of $G$ induced by $Y$ and  $\{u_1, \ldots, u_{[\frac n2]}\}$, we get $\widetilde{\alpha}(G) \ge \frac{n}{2s}$.
\end{proof}

\begin{remark}
In the case where $s_2 = 0$, we can even show that $\tilde h(n, H_{s,0}) \ge \frac{n}{2s-1}.$
\end{remark}

\subsection{Forbidden $M_s$ or  $M_s^*$}

In this section we need an auxiliary lemma about rooted trees. 
We call two vertex disjoint subforests of a rooted tree \textit{independent} if no vertex in one forest is an ancestor of a vertex in the other forest. 
We say that a maximal path rooted at the root of a rooted tree $T$ with inner vertices of degree $2$ in $T$ is a \textit{handle} of $T$, denote its vertex set $H(T)$.

\begin{figure}[H]\label{t}
	\centering
	\includegraphics[width=0.4\linewidth]{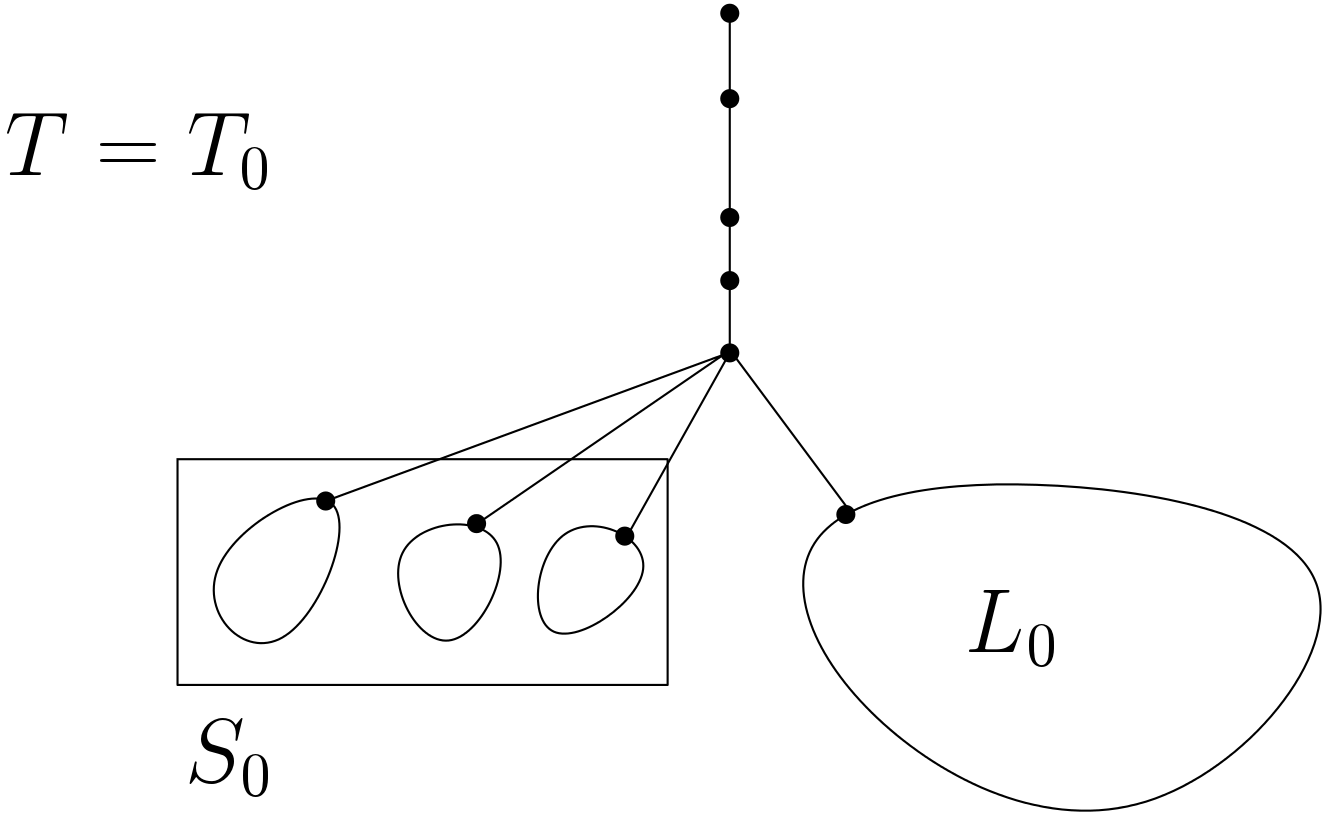}\qquad\\
	\vskip 0.9cm
	\includegraphics[width=0.4\linewidth]{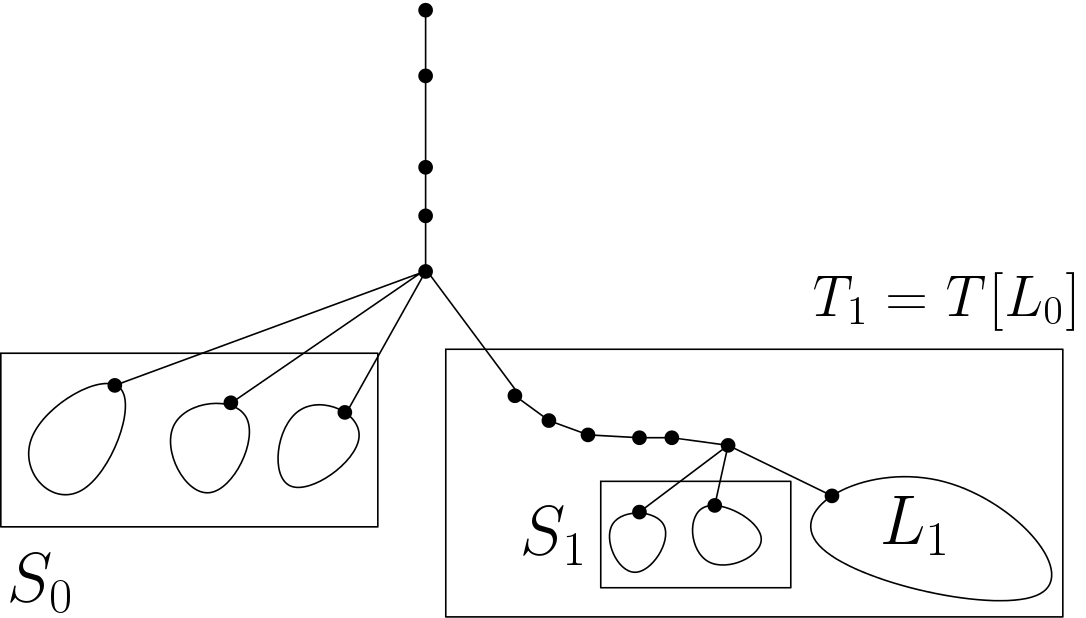}\qquad\\
	\vskip 0.9cm
	\includegraphics[width=0.6\linewidth]{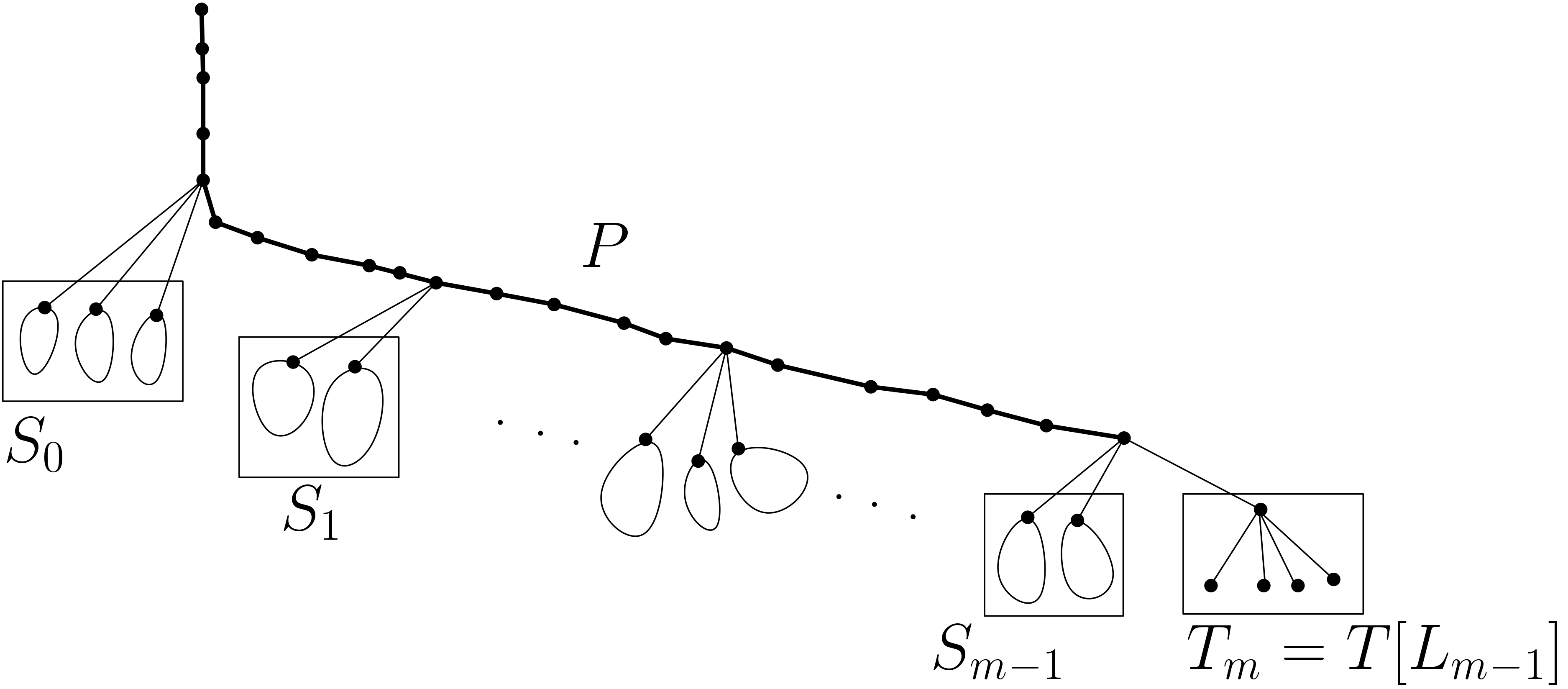}
	\caption{Illustration of Lemma \ref{trees}}
\end{figure}

\begin{lemma}\label{trees}
Any rooted tree on $n$ vertices has either height at least $\frac n4$, or it contains two independent subforests on at least $\frac n4$ vertices each.
\end{lemma}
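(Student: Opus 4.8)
The plan is to root the tree $T$ at its root $r$ and perform a careful decomposition based on where the "mass" of the tree sits relative to its handle. Let $H(T)$ be the handle of $T$, that is, the maximal path $r = x_0, x_1, \dots, x_m$ whose internal vertices $x_1, \dots, x_{m-1}$ all have degree $2$ in $T$; the last vertex $x_m$ is either a leaf or the first branching vertex. If $m \geq n/4$, then the height of $T$ is at least $n/4$ and we are done, so assume $|H(T)| = m+1 < n/4$ (in particular $m < n/4$). Now consider the vertex $x_m$ and its children $c_1, \dots, c_d$ with $d \geq 2$ (if $x_m$ is a leaf then $T$ is a path and we are already done). For each child $c_j$ let $T_j$ be the subtree of $T$ rooted at $c_j$, and let $n_j = |V(T_j)|$; these subtrees are pairwise independent, and $\sum_j n_j = n - (m+1) > n - n/4 = 3n/4$.

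The key step is a greedy bin-packing argument to split the $T_j$'s into two independent subforests of size at least $n/4$ each. First I would check the case where some single $n_j$ is large. If there is a $j$ with $n_j \geq n/4$, I recurse into $T_j$: applying the lemma inductively to $T_j$ (on fewer than $n$ vertices), either $T_j$ has height $\geq n_j/4 \geq n/16$... no — this loses the constant, so instead I handle the large-child case directly: if $n_j \geq n/4$ I can simply take the forest $\{T_j\}$ as one subforest (size $\geq n/4$) and the union of all the other $T_i$ together with the handle-path vertices $x_0, \dots, x_{m}$ as... but the handle vertices are ancestors of everything, so they cannot go into a subforest independent from $T_j$. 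The correct move: one subforest is $\{T_j\}$, the other is $\bigcup_{i \neq j} T_i$; these are independent since all $c_i$ are incomparable. If $\bigcup_{i\neq j} T_i$ has $< n/4$ vertices, then $n_j > n - n/4 - (m+1) > n/2$, and I recurse \emph{inside} $T_j$: replacing $T$ by $T_j$ we have a rooted tree on $n_j$ vertices, and it suffices to find the desired structure with threshold $n/4 \le n_j/... $ — again the threshold shifts. To make the recursion clean I would instead prove the stronger statement by induction: \emph{every rooted tree on $n$ vertices either has height $\ge n/4$ or contains two independent subforests of total combined size $\ge n/2$ with each of size $\ge ...$}; but the honest fix is to note that when we descend into $T_j$, the handle of $T_j$ extends the handle of $T$, so after finitely many descents either the accumulated handle length reaches $n/4$ (done) or we reach a branching vertex $x_M$ such that no child subtree below it has more than $n/2$ vertices, and crucially the number of vertices strictly below $x_M$ is still $> n - n/4 = 3n/4$ because the total handle length consumed is $< n/4$.

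So reduce to the case: branching vertex $y$, children subtrees $T_1, \dots, T_d$ each of size $< n/2$, with $\sum n_j > 3n/4$. Order them $n_1 \geq n_2 \geq \cdots$ and greedily assign: put $T_1$ in bin $A$, then add each successive $T_j$ to whichever bin currently has smaller total. A standard argument shows that at the end both bins have total at least $(\sum n_j - \max_j n_j)/2 \ge (3n/4 - n/2)/2 = n/8$ — which is too weak. Instead use: since every $n_j < n/2$ and $\sum n_j > 3n/4$, there is a \emph{prefix} $T_1, \dots, T_p$ whose sum first exceeds $n/4$; that sum is at most $n/4 + n_p < n/4 + n/2$, but more usefully, if we stop the moment the running sum exceeds $n/4$ then bin $A := T_1 \cup \cdots \cup T_p$ has $|A| \ge n/4$, and bin $B :=$ the rest has $|B| = \sum n_j - |A| > 3n/4 - (n/4 + n_p)$. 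Since $n_p \le n/2$ this only gives $|B| > 0$. The genuine fix is to additionally use induction on the largest part: if $n_p$ itself is $\ge n/4$ then $\{T_p\}$ alone is one subforest and everything else ($\ge n/2$ vertices, as argued) the other; if every $n_j < n/4$, then the greedy/prefix split is guaranteed to land each bin in $[n/4, 3n/4]$ because no single item can overshoot by more than $n/4$.

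\textbf{Main obstacle.} The delicate point, and the step I expect to require the most care, is the interaction between the handle (a long degree-$2$ path, which contributes to height) and the branching structure (which contributes to having two independent forests): a vertex on the handle is an ancestor of \emph{everything} below, so it can never be placed in a subforest independent from another nonempty subforest. Thus one must argue that either the handle is already long enough, or it is short enough (length $< n/4$) that the remaining $> 3n/4$ vertices, which live in $\ge 2$ independent subtrees each of controlled size, can be bin-packed into two forests of size $\ge n/4$ — and getting the constant exactly $1/4$ (rather than $1/8$ or $1/16$) forces the casework on whether the largest subtree exceeds $n/4$, possibly iterated by descending into that subtree while accumulating handle length. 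I would organize the whole argument as a single induction on $n$, with the descent-into-largest-child move as the inductive step and the short-handle-plus-bin-packing as the base case of that descent.
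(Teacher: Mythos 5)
Your closing casework at a single branching vertex (child of size at least $n/4$ versus its siblings, or a prefix split when every child has size below $n/4$) is correct \emph{provided} the subtrees hanging below that vertex still contain more than $3n/4$ vertices. The gap is in the claim that this invariant survives the descent: you justify it by saying ``the total handle length consumed is $<n/4$,'' but handle vertices are not the only vertices lost. At every branching vertex where you descend into the largest child, you discard the sibling subtrees (up to just under $n/4$ vertices \emph{each time}), and these losses accumulate over successive descents; nothing in the height argument bounds them. Concretely, take a spine $y_0,y_1,y_2,y_3$ where each $y_i$ has, besides the next spine vertex, a pendant star on roughly $n/5$ vertices. Your rule always sees siblings of total size $n/5<n/4$ and a largest child of size at least $n/4$, so it keeps descending and throwing the pendants away; below $y_2$ there remain only about $2n/5$ vertices and below $y_3$ about $n/5$, so the final bin-packing cannot produce two parts of size $n/4$, while the height is $O(1)$. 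The lemma is of course true for this tree --- pair the pendants at $y_0,y_2$ against those at $y_1,y_3$ --- but your procedure never revisits the discarded siblings, so it cannot find this.

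The repair is to keep, rather than discard, the side components peeled off at every level, and this is exactly what the paper does: it strips the handle, sets aside \emph{all} components other than the largest one, recurses into the largest component, and at the end has a root path $P$ (whose length bounds the height case) together with a family of pairwise independent components, each of size at most $n/2$ because each was not the unique largest component at its level. If $|P|<n/4$ these components carry more than $3n/4$ vertices in total, and a most-balanced two-part partition of them must give two sides of size at least $n/4$ each (otherwise the heavier side would consist of a single component of size more than $n/2$, a contradiction). So your single-vertex casework should be replaced by this global balanced-partition argument over the accumulated side components; as written, the induction/descent does not close.
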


\begin{proof}
We shall define vertex sets $S_i, L_i$ as follows. Let $T_0=T$.
Let $L_0$ be the vertex set of a largest component of $T_0-H(T_0)$, let $S_0$ be the set of vertices in all other components of $T_0-H(T_0)$. Assume that $T_0, \ldots, T_{i-1}$ be defined, as well as $S_0, \ldots, S_{i-1}$ and $L_0, \ldots, L_{i-1}$.
Let $T_i$ be the tree induced by $L_{i-1}$. Let $L_i$ be the set of vertices of a largest component of $T_i- H(T_i)$, 
let $S_i$ be the set of vertices in all other components of $T_i- H(T_i)$. We stop with $T_m$ being a star. Let $S_m$ be the set of leaves in $T_m$.   We have that $T$ is spanned by $S_0, \ldots, S_{m}$ and a path $P$ built out of handles. See Figure 3 for an illustration.
  We see that $S_0 \cup  \cdots \cup  S_m$ span a forest with pairwise independent components in $T$.
Let $t_i$, $i=0, \ldots, \ell$ be the sizes of components in this forest.  We would like to group these components into two balanced parts.  Recall that $t_i\leq n/2$ for each $i$.
If $|P|\geq n/4$, we are done. Assume that $|P|<n/4$.
Thus $t_0+\cdots + t_\ell = n-|P|>3n/4$.
Consider a partition $\{0, \ldots, \ell\}= I\cup J$ such that $t_I:= \sum_{i\in I } t_i$ is as close to  $t_J:= \sum_{i\in J} t_i$ as possible. Let $t_I\geq t_J$. If $t_I, t_J\geq n/4$, we are done.  If not, then $t_J<n/4$, $t_I>n/2$. 
Then in particular, $t_i>n/4$ for each $i\in I$, otherwise we would move this $i$ from $I$ to $J$ and create a more balanced partition.
In addition, we have that $I$ consists of one element, say $I=\{i\}$, otherwise we can again move an some $i$ from $I$ to $J$. This, however, contradicts the fact that each $t_i\leq n/2$.
\end{proof}

\begin{lemma}\label{2} Let $s_1\geq s_2> 0$. Then $ \tilde h(n, M_{s_1,s_2}) \ge  \tih(n, M_{s_1} )\geq \frac{n}{54s_1}$ and 
	$ \tilde h(n, M^*_{s_1,s_2}) \ge \tih(n, M^*_{s_1}) \geq \frac{n}{108s_1}$.
\end{lemma}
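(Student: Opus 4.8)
The plan is to reduce the statement to the ``diagonal'' cases $M_s=M_{s,s}$ and $M^*_s=M^*_{s,s}$. Since for $s_1\ge s_2$ the graph $M_{s_1,s_2}$ (resp. $M^*_{s_1,s_2}$) is an induced bipartite subgraph of $M_{s_1}$ (resp. $M^*_{s_1}$) respecting sides, and since $\tih(n,\cdot)$ only increases when we pass to such a subgraph, it suffices to prove $\tih(n,M_s)\ge n/(54s)$ and $\tih(n,M^*_s)\ge n/(108s)$. So fix an $M_s$-free $G=((U,V),E)$ with $|U|=|V|=n$. The first observation is that $M_s$-freeness makes the family $\{N(u):u\in U\}$ \emph{almost laminar}: if $N(u_1)\cap N(u_2)\ne\emptyset$ then $\min\{|N(u_1)\setminus N(u_2)|,\,|N(u_2)\setminus N(u_1)|\}<s$, for otherwise $u_1,u_2$, a common neighbour, and $s$ private neighbours of each would induce $M_s$; and if $d(u_1)\le d(u_2)$ the small side is automatically $|N(u_1)\setminus N(u_2)|$.

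Next I would dispose of the extreme degrees. If $n/(54s)$ vertices of $U$ have degree $<s$, taking them as $U'$ in the proof of Lemma~\ref{maxdegree} gives $\tilde\alpha(G)\ge n/(54s)$. If $n/(54s)$ vertices have degree $\ge n/2$, take $k=\lceil n/(54s)\rceil$ of them and let $u_0$ be one of least degree; any two of their neighbourhoods intersect (each has size $\ge n/2$), so the almost-laminar property gives $|N(u_0)\setminus N(u)|<s$ for every chosen $u$, hence $\bigl|\bigcap_u N(u)\bigr|\ge n/2-(k-1)s>n/4$, yielding a biclique and $\tilde\omega(G)\ge n/(54s)$. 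So we may assume that all but fewer than $2n/(54s)$ vertices of $U$ have degree in $[s,n/2)$; call this vertex set $W$.

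Now comes the structural core. On $W$ I would build a rooted forest $F$ encoding the almost-containment order: a vertex $p$ is a candidate parent of $u$ if $N(u)\cap N(p)\ne\emptyset$ and $d(p)$ is large enough relative to $d(u)$ that the almost-laminar property forces $N(u)\subseteq N(p)$ up to fewer than $s$ vertices; the parent of $u$ is then a candidate of minimum degree, and vertices without a candidate parent are roots. Applying Lemma~\ref{trees} to a largest tree of $F$ gives a dichotomy. If the tree is tall, a long root--to--leaf path is a chain of neighbourhoods nested up to a controlled error, and an appropriate segment of it either has a large common subneighbourhood (a biclique) or has small total union (a co-biclique formed with $V$ minus that union), exactly as in the degree-cleanup step. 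If instead we obtain two large independent subforests $F_1,F_2$, then the two corresponding vertex sets have pairwise almost-disjoint neighbourhoods, so whichever of $F_1,F_2$ has the smaller union of neighbourhoods, together with $V$ minus that union, is a large co-biclique. The case of many roots must be handled separately: many roots with pairwise disjoint neighbourhoods, after discarding the higher-degree half, give a set of vertices with total neighbourhood at most $n/2$, again a co-biclique. Fixing the thresholds so that every branch delivers at least $n/(54s)$ proves the first bound. The bound for $M^*_s$ runs identically, but $M^*_s$-freeness only yields the almost-laminar dichotomy for pairs with $N(u_1)\cup N(u_2)\ne V$ --- a pair covering $V$ and having $s$ private neighbours of each would induce $M_s\subseteq M^*_s$, which is excluded --- and in addition one must keep in reserve a linear-size set of vertices of $V$ lying outside all neighbourhoods in play, to serve as the pendant vertex of $M^*_s$; this costs one more factor of $2$ (it is also convenient here that $M^*_s$ is isomorphic to its own bipartite complement, so that $G$ and $G'$ may be treated symmetrically).

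The step I expect to be the real obstacle is making the forest $F$ and the translation of the dichotomy of Lemma~\ref{trees} quantitatively tight: the parent relation must be chosen so that (i) the $s$-errors accumulated along a root--to--leaf path do not swamp the linear bound (so chains are effectively honest or short), (ii) incomparability in $F$ genuinely forces almost-disjoint neighbourhoods, and (iii) the number of roots is under control. The instances in which $\{N(u)\}$ is a family of sliding windows of size $s$ show that $F$ can degenerate to $n-O(s)$ isolated vertices, so the ``many roots'' case must really be carried out, and they also explain why one cannot hope to remove the $\Theta(1/s)$ factor.
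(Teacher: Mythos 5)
Your reduction to the diagonal case, the use of the almost-nested dichotomy for intersecting neighbourhoods, and the plan of cleaning up extreme degrees and then invoking Lemma~\ref{trees} all match the paper's architecture in outline, but the step you yourself flag as ``the real obstacle'' is exactly the mathematical content of the paper's proof, and the substitute you sketch for it does not work. For the co-biclique branch you need that vertices lying in two independent subforests (or at distinct roots) have \emph{disjoint} neighbourhoods; your parent rule (``candidate parent of minimum degree'') does not give this, since the $M_s$-free dichotomy only says that intersecting neighbourhoods are almost nested, and a vertex can have many candidate parents, all meeting its neighbourhood, of which only one becomes its parent --- the rest may end up incomparable to it while still intersecting it. (Note also that the dichotomy yields ``almost nested'', not ``almost disjoint'', and pairwise near-disjointness would in any case not control the overlap of the two neighbourhood unions.) The paper spends most of the proof manufacturing precisely this property: it buckets the degrees into classes of width $2s$, discards alternate classes so that distinct blobs differ in degree by at least $2s$, and then proves (Claims 0--5) that the intersection graph $I$ on the surviving vertices has components that are tree closures, so that non-adjacency in $I$ --- hence independence in the underlying tree fed to Lemma~\ref{trees} --- means exactly disjoint neighbourhoods (Claim 6). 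Likewise, in the tall-tree branch you propose to intersect neighbourhoods along a long almost-nested chain and concede that the accumulated errors (up to $s$ per step over a chain of length $\Theta(n/s)$) may swamp the bound; the paper never intersects along the chain: a root-to-leaf path in a tree closure is a clique of $I$, i.e.\ a set with pairwise intersecting neighbourhoods, and Claim 7 observes that the bipartite graph between such a set and its neighbourhood contains no induced $H_s=2K_{1,s}$ (two disjoint stars plus a common neighbour of their centres would yield $M_s$), so Lemma~\ref{1} finishes. Without these ingredients your Case analysis has no valid co-biclique extraction and no valid biclique extraction, so the proof is incomplete at its core.

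Two smaller points. For $M^*_s$ the paper's reduction is cleaner and you should adopt it: if some $v\in V$ has $d(v)\le n/2$ then $G[U\setminus N(v), V\setminus\{v\}]$ is $M_s$-free, and otherwise one passes to the bipartite complement using that $M^*_s$ is self-complementary; this is where the extra factor $2$ comes from. Your parenthetical claim that a pair of centres covering $V$ with $s$ private leaves each ``would induce $M_s\subseteq M^*_s$, which is excluded'' is backwards: $M_s$ is an induced subgraph of $M^*_s$, so $M^*_s$-freeness does not forbid induced copies of $M_s$ --- securing the pendant vertex is exactly the issue, which is why the dichotomy fails for covering pairs and why the reduction via a low-degree vertex of $V$ is used instead.
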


%
%
%\begin{lemma}\label{lem:generalm}
%For any  $s\geq 2$, there is $c=c(s) $ such that   $\tilde{h}(n, M_s) \geq cn$.
%\end{lemma}

\begin{proof} Let $s= s_1$, $s\geq 1$.
Let $G''\subseteq K_{n,n}$ have partite sets $U''$ and $V$ and such that $G''$ has no induced copy of $M_s$ with  the smaller partite set in $U''$. Assume that $\tilde{h}(G'')< n/(25s)$. \\

{\it Proof outline:} We shall first delete a few vertices of small degrees  (at most $6s$) and some other sets of vertices so that the remaining ones belong to blobs such that any two vertices in the same blob have degrees different by at most $2s$  and any two vertices from different blobs have degrees different by at least $2s$.  We shall call the resulting graph $G$ and its parts $U$ and $V$.  We introduce an auxiliary graph $I$ on $U$ whose edges correspond to two vertices with intersecting neighborhoods in $G$   and show that this auxiliary graph has a very special structure, i.e., formed of vertex-disjoint cliques that are  pairwise either completely adjacent or completely disjoint. This gives rise to the second auxiliary graph $J$ for which we show that it is a tree closure of some tree.\\

Let $S$ be the set of vertices of $G''$ from $U''$ of degree at most $6s$.  Assume $|S|\geq n/(24s)$, let $S'\subseteq S$, $|S'| = n/(24s)$. Then if $V'= N(S')$, $|V'|\leq n/4$. Thus  $(V-V', S')$ form a co-biclique with parts of sizes at least $n/(24s)$. This contradicts our assumption that  $\tilde{h}(G'')< n/(25s)$. Thus $|S|\leq n/(24s)$.\\

Let $G'= G''- S$, where $S$ is the set of vertices of $U''$ of degree at most $6s$. Let $U' = U''-S$.
Let $U' = U_1\cup \cdots \cup U_{n/(2s)}$, where $U_i =\{ v\in U': ~ (i-1) 2s\leq d(v) \leq i 2s\}$. Since degree of vertices in $U'$ are at least $6s$, $U_1 $ and $U_2$ are empty.
Let $U_e= U_4\cup U_6\cup \cdots$, $U_o =  U_3\cup U_5 \cup \cdots $. Assume without loss of generality that 
$|U_e|\geq |U'|/2$.  Consider $G = G'[U_e, V]$. Let $U= U_e$. Then $|U|\geq (n- n/(24s))/2 = n/2 - n/(48 s).$\\

Introduce an {\it auxiliary graph } $I$ with vertex set $U$ and two vertices  adjacent iff their neighborhoods in $G$ intersect.
We shall show that each component of  $I$ is a {\it closure of a rooted tree} or just a {\it tree closure}, i.e., a graph obtained from a rooted tree by adding, for each vertex $v$,  all edges between  $v$ and each of  its ancestors. Here an ancestor is  a vertex on a path from $v$ to the root.\\

\noindent
{\it Claim 0.}  Let $x, y, z\in U$, $xy, yz \in E(I), xz\not\in E(I)$. Then $d(x)+d(z) < d(y)+2s$.\\

Assume otherwise, then w.l.o.g. $|N(x)\setminus N(y)|\geq s$. Then $|N(y)\setminus N(x)|<s$, implying  $|N(y)\setminus N(z)|>s$, that in turn implies that $|N(z)\setminus N(y)| <s$. Since $N(z)\cap N(y) \subseteq N(y)\setminus N(x)$, $|N(z)\cap N(y)|<s$. Thus $|N(z)|< 2s$, a contradiction.\\

\noindent
{\it Claim 1.}  For any $i = 2, \ldots,  n/(4s)$,  $I[U_{2i}|$ is a pairwise vertex disjoint union of cliques. Call the family of these cliques $\K_i$, refer to  this family as an $i^{\rm th}$ {\it blob}.\\

Assume that $I[U_{2i}|$ is not a disjoint union of cliques. Then there are three vertices $x, y, z\in U_{2i}$ such that $xy, yz \in E(I), xz\not\in E(I)$. We have that the degrees of $x$, $y$, and $z$ differ by at most $2s$. Thus $d(x)+d(z)\geq d(x) + d(x) -2s$.
On the other hand $d(y) + 2s\leq d(x) + 2s + 2s$.  From Claim 0 we have  $d(x)+d(z)<d(y)+2s$. This implies that $2d(x) - 2s <d(x) +4s$, i.e., 
$d(x)<6s$, a contradiction.\\

The following two claims also follow from Claim 0 similarly. We use the fact that by definition, the degrees of vertices in $G$ from two distinct blobs differ by at least $2s$.\\

\noindent
{\it Claim 2.}  For any $K\in \K_i$ and $K'\in \K_j$, $i\neq  j$, the bipartite graph with parts $V(K), V(K')$ is either complete or empty.\\

If not,  there are $x,y $,  and  $z$ such that $xy, yz \in E(I), xz\not\in E(I)$, where  either ($z\in V(K)$ and $x,y\in V(K')$) or  
($z\in V(K')$ and $x,y\in V(K)$). 
Then we see that $|d(x)-d(y)|\leq 2s$, and $d(x), d(y), d(z)> 6s$. 
Thus $d(x)+d(z) > 6s+ d(x)\geq 4s + d(y) $, a contradiction to Claim 0. \\

Let $J$ be a graph with vertex set $\K_1\cup \cdots \cup \K_{n/(4s)}$ with two cliques adjacent iff there is an edge between them in $I$. \\

\noindent
{\it Claim 3.}  If $K\in \K_i,  K'\in \K_j, K''\in \K_k$, and $KK', K'K'' \in E(J)$, and  $KK''\notin E(J)$, then $j\geq i$ and $j\geq k$.\\

Assume $x\in V(K), y\in V(K'), z\in V(K'')$, $xy, yz\in E(I)$, $xz\not\in E(I)$. 
Then by Claim 0 $d(x)+d(z) < d(y)+2s$.  Thus $d(y) \geq d(x)-2s$ and $d(y)\geq d(z)-2s$. 
Thus $j\geq i$ and $j\geq k$.\\

\noindent
{\it Claim 4.} Each induced connected subgraph  $F$  of $J$ contains a vertex adjacent to all other vertices in $F$. \\

We shall prove this by induction on the order of $F$ with a trivial basis of a one-vertex graph. 
Assume that the statement is true for all connected subgraphs with order less than $|V(F)|$.
Let $Z_m$ be  a vertex of $F$ that belongs to the highest indexed blob. Let $F'$ be a component of $F-\{Z_m\}$.
Let $Z_q$ be a vertex of $F'$ that belongs to the highest indexed blob. 
By induction $Z_q$ is adjacent to all other vertices of $F'$.
Since $F$ is connected, $Z_m$ is adjacent to some vertex $Z_j$ of $F'$.
If $Z_m$ is not adjacent to $Z_{q}$, then either $F$ is disconnected (in case $|V(F')|=1$) or  the vertices 
$Z_j, Z_m, Z_{q}$ contradict Claim 3. So, $Z_{q}Z_m\in E(J)$.  If $Z_m$ is not adjacent to some vertex $Z_i$ of $F'$, then again $Z_i, Z_{q}, Z_m$ contradict Claim 3. Thus $Z_m$ is adjacent to all  vertices of $F'$. Since this holds for each component of $F-\{Z_m\}$, we see that $Z_m$ is adjacent to all other vertices of $F$.  Claim 4. implies the following.\\

\noindent
{\it Claim 5.}  Each component of $J$ and thus of $I$  is a tree closure for some tree.\\

\noindent
{\it Claim 6.}  If a component of $I$ has  vertex set $Q$ and an underlying tree with two independent subforests on vertex sets $Q_1, Q_2$ of sizes at least $|Q|/4$ each, then in $G$  there is a co-biclique with parts in $Q, N(Q)$ of sizes at least $|Q|/4, |N(Q)|/2$ respectively.\\

Since $Q_1$ and $Q_2$ induce independent forests in an underlying tree of a component of $I$, we have that $N(Q_1)\cap N(Q_2)=\emptyset$. Thus, without loss of generality, $|N(Q_1)|\geq |N(Q)|/2$. Then $Q_2$ and $N(Q_1)$ form a desired co-biclique.\\

\noindent
{\it Claim 7.} Let  $X$ be a subset of vertices from $U$ that induces pairwise disjoint union of cliques in $I$. If $|X|= cn$ for a constant $c \le 1$,  then $\tilde{h}(G) \geq cn/(6s)$.\\

Assume first that there is a set $S$  that induces a clique in $I[X]$ of size at least $cn/3$. We can assume that if $V_S= N_{G}(S)$ then $|V_S|\geq cn/3$, otherwise $(S,V-V_S)$ induce a co-biclique with parts of sizes $cn/3$ and $n - cn/3$, that is at least $ cn/3$ for $c\le 1$. Thus $G[S, V_S]$ is a bipartite graph with parts of sizes at least $cn/3$, and this graph has no induced $2K_{1, s}$, otherwise these stars and  a common neighbor of their centers induce $M_s$. Thus by Lemma \ref{1}, $\tilde{h}(G) \geq cn/ (6s)$.

Now, we are left with the case that all cliques in $I[X]$ have size at most $cn/3$. 
Split the cliques of $I[X]$ in two groups of total size at least $cn/3$, let the vertex sets of these groups be $S'$ and $S''$.
Let w.l.o.g.  $|N(S')|<|N(S'')|$ in $G'$. Then $(V-N(S'), S'')$ induces a co-biclique with parts at least $cn/3$.\\

\noindent
{\it  Final argument: }
Consider a component of $I$ on a vertex set $Q$. By Lemma \ref{trees}, it either has a clique of size $|Q|/4$ (when the underlying tree has respective height) or gives a co-biclique in $G$ with parts of sizes $|Q|/4$ and $|N(Q)|/2$. \\

\noindent
{\it Case 1.} At least a $\frac{6s}{6s+1}$-fraction of the vertices in $U$  is spanned by the components of $I$  with large cliques, i.e., giving a subset of $\frac{|U|}4 \frac{6s}{6s+1} $ vertices that is a union of cliques in $I$. Thus by Claim 7. 
$$\tilde{h}(G) \geq  \frac{1}{6s}\frac{|U|}{4}\frac{6s}{6s+1}\geq \frac{1}{24s+4}\left(\frac{n}{2}-\frac{n}{48s}\right)\geq \frac{n}{48s+8} - \frac{n}{1152s^2+196s}.$$

\noindent
{\it Case 2.}  At least a $\frac{1}{6s+1}$-fraction of the vertices of $U$ is spanned by the components of $I$ that give large co-bicliques.  
Let $\cal{Q}$ be the family  of vertex sets of respective components. So, for each $Q\in \cal{Q}$,  we have a subset $Q_1$ of $Q$, $|Q_1|\geq |Q|/4$ that forms a co-clique with $Q''\subseteq N(Q)$, $|Q''|\geq |N(Q)|/2$. 
Then $$U^*= \bigcup_{Q\in \cal{Q}}  Q_1  ~~\mbox{  and }~~ V^*= \bigcup_{Q\in \cal{Q}}  Q'' \cup \left(V\setminus N\left(\bigcup_{Q\in \cal{Q}} Q\right)\right)$$
  form partite sets of  a co-biclique. Note that $|U^*|\geq \frac {|U|}4\frac{1}{6s+1}$, $|V^*|\geq |V|/2\geq n/2$.
Thus 
$$\tilde{h}(G) \geq \frac{n}{48s+8} - \frac{n}{1152s^2+196s}.$$ 

We have that the lower bounds in Case 1 and Case 2  are larger than $\frac{n}{54s}$ for $s \ge 2$. For $s=1$, these bounds are at least $\frac{n}{59s}$, however we can give an explicit argument for a bound of $n/3$ for $s=1$.   Note that one can definitely improve on the bound  $\frac{n}{54s}$.  This concludes the proof of the Lemma for $M_s$. \\

Now, consider an $M_{s}^*$-free bipartite graph $G$ with top part $U$ and bottom part $V$, both of size $n$. 
If there is a vertex $v \in V$ of degree $d(v) \leq  n/2$, then the graph $G[U-N(v), V\setminus\{v\}]$ is $M_s$-free.
 Thus, by the previous result on $M_s$,  we have $\tilde h(G) \ge n/({2\cdot 54s})$.  If $V$ does not contain a vertex of degree at most  $n/2$, consider the bipartite complement $G'$ of $G$. Since $M^*_s$ is (bipartite) self-complementary, $G'$ does not contain $M^*_s$ either, but we have a vertex $v \in V$ with $d_{G'}(v) \leq   \frac n2$ and thus, we can apply the same argument.  \end{proof}

\begin{proof}[Proof of Theorem \ref{main}]
Consider a strongly acyclic graph $H$ such that neither $H$ nor $H'$ is in ${\cal H}$. Then by Theorem \ref{characterization}
$H$ has one part of size $1$ or $2$. If one part is of size $1$,  i.e., this part consists of a vertex $v$ that has $s$ neighbors and $t$ non-neighbors for some $s$ and $t$. Thus if $G$ is $H$-free, each vertex in the top part has either degree at most $s-1$ or degree at most $t-1$ in a bipartite complement. Assume without loss of generality that at least half of the vertices in the top part have degree at most $s-1$.  Applying Lemma \ref{maxdegree}  to these vertices in the top part and the bottom part, we see that $H$ has  a bi-coclique with parts of sizes at least $n/2s$.
If $H$ has one part of size $2$ and another part of size at least $2$, then by Theorem \ref{characterization}, $H$ is $H_{s_1,s_2} $, $M_{s_1,s_2} $ or $M'_{s_1,s_2} $, with $s_1, s_2 \ge 0$, or their bipartite complement.  
Then the result follows from Lemmas \ref{1} and \ref{2}.
\end{proof}

%%%%%%%%%%%%%%%%%%%%%%%%%%%%%%%%%%%%%%%%%%%%%%%%%%
%%%%%%%%%%%%%%%%%%%%%%%%%%%%%%%%%%%%%%%%%%%%%%%%%%
%%%%%
%%%%%
\section{Tight bounds for all strongly acyclic graphs with two vertices in each part}\label{small}
%%%%%
%%%%%
%%%%%%%%%%%%%%%%%%%%%%%%%%%%%%%%%%%%%%%%%%%%%%%%%%
%%%%%%%%%%%%%%%%%%%%%%%%%%%%%%%%%%%%%%%%%%%%%%%%%%
We consider strongly acyclic bipartite graphs with each part of size $2$.  These are exactly $2K_2$, $(2K_2)'$,  $P_4$, and $H_4$, 
where $H_4$ is such a graph with exactly two adjacent edges, $2K_2$ has two disjoint edges.
We shall give bounds for $\tih(H)$ for each of these graphs. Recall that $\tih(H)=\tih(H')$, where $H'$ is a bipartite complement of $H$.

\begin{lemma}
Let $G$ be a bipartite $P_4$-free graph with $n$ vertices in each part. Then $\tih(G)  \geq n/3$. This bound is tight for $n\equiv 0 \pmod 3$.
\end{lemma}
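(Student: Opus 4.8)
The plan is to first pin down the structure of $P_4$-free bipartite graphs and then reduce the statement to an elementary weighted partition problem.

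\textbf{Structure.} I would begin by observing that $G$ contains an induced $P_4$ respecting sides if and only if there are $u_1,u_2\in U$ with $N(u_1)\cap N(u_2)\neq\emptyset$ and $N(u_1)\neq N(u_2)$: given such $u_1,u_2$, a common neighbour $v_1$ together with a vertex $v_2$ in the symmetric difference of the neighbourhoods induces a $P_4$ on $\{u_1,v_1,u_2,v_2\}$ respecting sides, and conversely every induced $P_4$ arises this way. Hence $G$ is $P_4$-free iff any two vertices of $U$ have equal or disjoint neighbourhoods, which means that $G$ is a disjoint union of complete bipartite graphs $K_{a_1,b_1},\dots,K_{a_k,b_k}$ (the $i$-th having $a_i$ vertices in $U$ and $b_i$ in $V$) together with $a_0$ isolated vertices in $U$ and $b_0$ in $V$, where $\sum_{i\ge 0}a_i=\sum_{i\ge 0}b_i=n$. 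Two consequences I will use repeatedly: $\tilde\omega(G)=\max_i\min(a_i,b_i)$, since a biclique must live inside a single component; and for \emph{any} partition $\{1,\dots,k\}=P\sqcup Q$, letting $A$ consist of all isolated vertices of $U$ together with the $U$-sides of the components indexed by $P$, and $B$ of all isolated vertices of $V$ together with the $V$-sides of the components indexed by $Q$, the pair $(A,B)$ is a co-biclique, because no component contributes to both $A$ and $B$.

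\textbf{Lower bound, easy cases.} If $\tilde\omega(G)\ge n/3$ there is nothing to prove, so assume $\min(a_i,b_i)<n/3$ for every $i$; also if $a_0\ge n/3$ or $b_0\ge n/3$, pairing the isolated vertices on one side with $n/3$ vertices of the other side gives the desired co-biclique, so assume $a_0,b_0<n/3$. It now suffices to find a partition $\{1,\dots,k\}=P\sqcup Q$ with $\sum_{i\in P}a_i\ge n/3-a_0$ and $\sum_{i\in Q}b_i\ge n/3-b_0$; since the $b_i$ sum to $n-b_0>2n/3$, the second requirement is equivalent to $\sum_{i\in P}b_i\le 2n/3$. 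Call a component \emph{tall} if $a_i>n/3$ and \emph{wide} if $b_i>n/3$; the components' $U$-sides are disjoint and fill at most $n$ vertices, so there are at most two tall components, likewise at most two wide, and none is both (that would force $\min(a_i,b_i)>n/3$). If some component is tall and some is wide, put the tall ones in $P$, the wide ones in $Q$, and the rest arbitrarily; then $|A|>n/3$ and $|B|>n/3$. If there is a tall component but no wide one, so $b_i\le n/3$ for all $i$, put the tall ones in $P$ and everything else in $Q$: then $|A|>n/3$, while $|B|=n-\sum_{\text{tall}}b_i\ge n-2\cdot\tfrac n3=\tfrac n3$ because the at most two tall components have $b_i<n/3$. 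The case of a wide but no tall component is symmetric.

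\textbf{The small-components case.} The one remaining case is that every component is \emph{small}: $a_i\le n/3$ and $b_i\le n/3$ for all $i$. Here I would order the components by decreasing ratio $a_i/b_i$ and let $P$ be the longest prefix with $\sum_{i\in P}b_i\le 2n/3$; this prefix is nonempty (each $b_i\le n/3<2n/3$) and proper (the $b_i$ sum to $n-b_0>2n/3$). I claim $\sum_{i\in P}a_i\ge n/3-a_0$, which then finishes the proof, since $|A|\ge n/3$ and $|B|=n-\sum_{i\in P}b_i\ge n/3$. Suppose not. Let $\rho$ be the ratio of the first excluded component and put $s=\sum_{i\in P}b_i$; by the ordering, $\sum_{i\in P}a_i\ge\rho s$ and $\sum_{i\notin P}a_i\le\rho\big((n-b_0)-s\big)$. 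Feeding these into $\sum_i a_i=n-a_0$ together with the assumption $\sum_{i\in P}a_i<n/3-a_0$ yields $\rho s<n/3-a_0$ and $\rho\big((n-b_0)-s\big)>2n/3$; eliminating $\rho$ gives $s(n-a_0)<(n/3-a_0)(n-b_0)\le\tfrac n3(n-a_0)$, where the last inequality uses $a_0<n/3$, so $s<n/3$. But then, $P$ being a maximal prefix with $b$-sum at most $2n/3$, the first excluded component has $b$-size greater than $2n/3-s>n/3$, contradicting the absence of wide components. I expect this small-components case to be the main obstacle: any greedy that watches only one of the two mass functions $\sum a_i,\sum b_i$ overshoots the other, so one is forced to sort by $a_i/b_i$, and verifying $s<n/3$ is where the hypotheses $a_0,b_0<n/3$ and $\tilde\omega(G)<n/3$ are really used.

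\textbf{Tightness.} For $n\equiv 0\pmod 3$, take $G=K_{n/3,n/3}\sqcup K_{n/3,n/3}\sqcup K_{n/3,n/3}$. It is $P_4$-free (a disjoint union of complete bipartite graphs) with $\tilde\omega(G)=n/3$, and any co-biclique must take from each of the three components either its whole $U$-side or its whole $V$-side, producing parts of sizes $\tfrac{pn}{3}$ and $\tfrac{(3-p)n}{3}$ for some $p\in\{0,1,2,3\}$; hence $\tilde\alpha(G)=n/3$ and $\tilde h(G)=n/3$.
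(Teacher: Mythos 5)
Your proof is correct, and it rests on the same two pillars as the paper's: the structural fact that a $P_4$-free bipartite graph is a vertex-disjoint union of bicliques (you additionally spell out the isolated vertices $a_0,b_0$, which the paper treats implicitly as degenerate bicliques), and the identical tightness example of three disjoint copies of $K_{n/3,n/3}$. Where you genuinely diverge is the balancing step that turns the components into a large co-biclique once $\widetilde{\omega}(G)<n/3$. The paper first shows $a_i<n/3$ and $b_i<n/3$ for every component (your tall/wide discussion covers the same ground, though note a single tall component already yields the co-biclique $(U_i,\,V\setminus V_i)$ directly, with no partition needed), then splits components by whether $a_i\le b_i$ or $a_i>b_i$, considers the co-biclique formed by the $V$-sides of the first group against the $U$-sides of the second, and finishes with a minimality trick: a minimal subfamily of the second group whose $U$-sides total more than $n/3$ automatically totals less than $2n/3$, and since $b_i<a_i$ there, so do its $V$-sides, giving the co-biclique $(U''',\,V\setminus V''')$. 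Your route instead reduces to an explicit weighted-partition problem and handles the main all-components-small case by sorting by the ratio $a_i/b_i$ and taking a maximal prefix with $b$-sum at most $2n/3$; I checked the elimination of $\rho$, and the chain $s(n-a_0)<(n/3-a_0)(n-b_0)\le\tfrac n3(n-a_0)$, hence $s<n/3$ and a forbidden wide component, is sound given $a_0<n/3$. The trade-off: the paper's argument is shorter and needs no sorting or algebra, while yours is more mechanical, makes the use of the hypotheses $a_0,b_0<n/3$ and $\widetilde{\omega}(G)<n/3$ completely explicit, and isolates a reusable partition lemma about the pairs $(a_i,b_i)$.
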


\begin{proof}
Let $G$ have partite sets $U$ and $V$. It is easy to see that  $G$ is a pairwise vertex disjoint union of bicliques.  Let, for some index set $I$,  these bicliques have partite sets $U_i$ and $V_i$ of sizes $a_i$, $b_i$, respectively, $U_i\subseteq U$, $i\in I$. Observe first that $\min\{a_i, b_i\}< n/3$, for each $i\in I$, otherwise $i^{\rm th}$ biclique gives us $\tilde{\omega}(G)>n/3$.
Moreover $a_i<n/3$ and $b_i<n/3$,  for each $i\in I$ since otherwise  a cobiclique with parts $U_i, V-V_i$ or $V_i, U-U_i$ has parts of sizes greater than $n/3$.

Let $I'$ be the set of indices, so that $a_i\leq b_i$, $i\in I'$. Let $I''=I\setminus I'$.
Let $U' = \cup_{i\in I'}  U_i$,  $V' = \cup _{i\in I'} V_i$, $U''= U-U'$, $V''= V-V'$,  $a' =|U'|$, $a'' = |U''|$, $b' =|V'|$, $b''=|V''|$.
Consider a cobiclique with parts $V', U''$. We can assume that either $b'$ or $a''$ is less than $n/3$, say $b'<n/3$. 
Then $a'<n/3$ since for each $i\in I'$, $a_i\leq b_i$. Thus $a''>2n/3$.
%and $b''>2n/3$. For some $i\in I''$, we have  $a_i\geq n/3$, otherwise a cobiclique with parts $A_i$ and $B-B_i$ gives  $\tilde{\alpha}(G)>n/3$.

Consider a minimal subset  $I'''\subseteq I''$ such that $U''' = \cup _{i\in I'''} U_i$ has size $a''' >n/3$.  Then  $a'''< 2n/3$ otherwise for any $i\in I'''$, 
$|U''' - U_i| > 2n/3 - n/3 = n/3$. In particular, we could have taken $I'''-\{i\}$ instead of $I'''$, contradicting its minimality. 
Thus $V''' = \cup _{i\in I'''} V_i$ has size less than $2n/3$. This implies that  $U'''$ and $V-V'''$ form a co-biclique with each part of size at least $n/3$.

Note, that the bound shown is best possible, by the following $P_4$-free construction for every natural $n$
with $n \equiv  0 \pmod 3$.  Take $G$ to be  the disjoint union of $3$ complete bipartite graphs $K_{n/3,n/3}$.
Clearly this is $P_4$-free, and $\tih(G) = n/3$. 
\end{proof}

\begin{lemma}
Let $G$ be a $2K_2$-free  bipartite graph with $n$ vertices in each part. Then $\tih(G)  \geq \ceil{\frac n2}$. This bound is tight. 
\end{lemma}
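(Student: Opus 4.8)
The plan is to understand the structure of $2K_2$-free bipartite graphs and then extract a large homogeneous pair from that structure. Recall that a bipartite graph $G=((U,V),E)$ is $2K_2$-free precisely when it has no induced matching of size $2$ respecting the sides; equivalently, the neighborhoods $\{N(u):u\in U\}$ form a \emph{chain} under inclusion once we discard isolated vertices (if $u,u'$ both have nonempty neighborhoods and neither $N(u)\subseteq N(u')$ nor $N(u')\subseteq N(u)$, then picking $v\in N(u)\setminus N(u')$ and $v'\in N(u')\setminus N(u)$ gives an induced $2K_2$). So first I would establish this chain structure: write $U=U_0\cup U_1$ where $U_0$ is the set of isolated vertices of $U$, and order $U_1=\{u_1,\dots,u_m\}$ so that $N(u_1)\subseteq N(u_2)\subseteq\cdots\subseteq N(u_m)$.

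Next I would locate a large biclique or co-biclique using a threshold argument. Set $r=\lceil n/2\rceil$. Consider the vertex $u_{m-r+1}$ (the $r$-th from the top in the chain) and its neighborhood $N(u_{m-r+1})$; the vertices $u_{m-r+1},\dots,u_m$ — there are $r$ of them — all contain $N(u_{m-r+1})$ in their neighborhoods. If $|N(u_{m-r+1})|\geq r$, these $r$ vertices of $U$ together with any $r$ vertices of $N(u_{m-r+1})$ form a biclique of side $r=\lceil n/2\rceil$, and we are done. Otherwise $|N(u_{m-r+1})|\leq r-1\leq n - r$ (using $r-1 < n-r+1$, i.e.\ $r\le n/2+1/2$), and then the $r$ vertices $u_1,\dots,u_{m-r+1}$ of the bottom of the chain, \emph{together with all of $U_0$}, have neighborhoods all contained in $N(u_{m-r+1})$; more carefully, I should just take any $r$ vertices of $U$ all of whose neighborhoods lie inside a common set of size $\le n-r$ (the bottom of the chain plus the isolated vertices supplies at least $r$ such vertices because $m-(m-r+1)+1 = r$ already suffices even ignoring $U_0$). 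Their common non-neighborhood in $V$ has size $\ge n-(r-1)\ge r$, yielding a co-biclique of side $r$. One edge case: if $m<r$, i.e.\ fewer than $r$ non-isolated top vertices, then $|U_0|>n-r\ge r-1$, and I would handle it directly — if $m\le n-r$ then the bottom $r$ vertices (all of $U$ if $m+|U_0|$... ) — actually this is subsumed by the second case since we only ever needed $r$ vertices of $U$ with bounded union of neighborhoods, and such vertices always exist (take the $r$ lowest in the chain, breaking into $U_0$ first).

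Finally I would prove tightness. For the upper bound, it suffices to exhibit, for each $n$, a $2K_2$-free bipartite graph $G$ with $\tih(G)=\lceil n/2\rceil$. A natural candidate is a "half-graph"-type construction: order $U=\{u_1,\dots,u_n\}$, $V=\{v_1,\dots,v_n\}$ and put $u_iv_j\in E$ iff $i+j\le n+1$ (a staircase). This is $2K_2$-free since the neighborhoods form a chain. One then checks that the largest biclique and the largest co-biclique each have side exactly $\lceil n/2\rceil$: a biclique on index sets $A\subseteq[n]$, $B\subseteq[n]$ needs $\max A+\max B\le n+1$, forcing $|A|+|B|\le n+1$ hence $\min(|A|,|B|)\le\lceil n/2\rceil$ — wait, we need both $=t$, so $2t\le n+1$, $t\le\lceil n/2\rceil$ — and dually for co-bicliques using $\min A+\min B\ge n+2$. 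So the extremal value $\lceil n/2\rceil$ is attained.

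The main obstacle I anticipate is bookkeeping around the isolated vertices of $U$ and the parity/rounding when $r=\lceil n/2\rceil$, to make sure the two cases genuinely cover all configurations and that in the co-biclique case the bound $n-(r-1)\ge r$ holds for every $n$ (it does: $n-r+1\ge r \iff n+1\ge 2r$, true for $r=\lceil n/2\rceil$). Everything else is routine once the chain structure is in hand.
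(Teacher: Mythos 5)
Your proposal is correct and follows essentially the same route as the paper: order the top vertices so their neighborhoods form a chain under inclusion, split at the vertex in position $\lceil n/2\rceil$, and note that the vertices above it form a biclique with its neighborhood while those below form a co-biclique with its non-neighborhood, one of which has size at least $\lceil n/2\rceil$. Two small remarks: isolated vertices have empty neighborhoods and are therefore comparable to everything, so the chain covers all of $U$ and your $U_0$ bookkeeping and the edge case $m<r$ disappear; and your half-graph certifies tightness just as well as the paper's example ($K_{\lceil n/2\rceil,n}$ plus $\lfloor n/2\rfloor$ isolated vertices), though its largest co-biclique has side $\lfloor n/2\rfloor$ rather than ``exactly $\lceil n/2\rceil$'' --- only the upper bound $\tih(G)\le\lceil n/2\rceil$ is actually needed there.
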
  

\begin{proof}
	Let $G$ be bipartite $2K_2$-free with parts $U, V$ of size $n$ each. Then we have for any vertices 
$ u,u' \in U$   that $N(u) \subseteq N(u')$ or  $N(u') \subseteq N(u)$.  Thus there is a total ordering $\{u_1,\ldots, u_n\}$ of the vertices in $U$ where $i < j$  if and only if  $N(u_j) \subseteq N(u_i)$. Consider the two subgraphs $G_1 = G[U_1 \cup V_1]$, $G_2 = G[U_2 \cup V_2]$, with 
$U_1 = \{u_1,\ldots, u_{\ceil{\frac n2}} \},$ $~ V_1 = N(u_{\ceil{\frac n2}}), $ $ ~
		U_2 = \{u_{\ceil{\frac n2}}, \ldots, u_n \},$ $~ V_2 = V \setminus N(u_{\ceil{\frac n2}}).$
	
%	
%	\begin{figure}[H]
%		\centering
%		\includegraphics[width=0.6\textwidth]{graphics/no2k2.png}
%		\caption[the structure of $2K_2$-free bipartite graphs]{The vertex sets $X_1, X_2, Y_1, Y_2$ of any $2K_2$-free bipartite graph }
%	\end{figure}
%	
	By our vertex ordering, we have that $N(u_{\ceil{\frac n2}}) \subseteq N(u_i)$, $1\le i < \ceil{\frac n2}$, and thus, $G_1$ is a biclique.
	On the other hand,  $V \setminus N(u_{\ceil{\frac n2}}) \subseteq V\setminus N(u_i)$, $\ceil{\frac n2} < i \le n$, and thus, $G_2$ is a co-biclique. We know  that $|U_1| = |U_2| = \ceil{\frac n2}$. Since $|V_1| + |V_2| = n$, one of them has to have size at least $\ceil{\frac n2}$, which gives us $\max\{\widetilde{\omega}(G), \widetilde{\alpha}(G) \} \geq  \ceil{\frac n2} $. \\
	
	Note, that $\ceil{\frac n2}$ is best possible. Consider a bipartite graph $G$ that is a union of  a complete bipartite graph $K_{\ceil{n/2}, n}$ and add $\floor {\frac n2}$ isolated vertices added to the smaller part. Both parts have size $n$, we have $\tih(G) = \ceil{\frac n2}$ and $G$ is $2K_2$-free.
\end{proof}

\begin{lemma}
Let $G$ be an $H_4$-free  bipartite graph with $n$ vertices in each part. Then $\tih(G)  \geq \lfloor 2n/5 \rfloor $. This bound is tight for $n\equiv 0 \pmod 5$.
\end{lemma}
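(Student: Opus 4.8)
First I would recall the structure of $H_4$-free bipartite graphs. Here $H_4$ is the path $P_4$ with one extra pendant edge at a degree-$2$ vertex — equivalently, the bipartite graph with one vertex of degree $2$ on one side and its two neighbours having degrees $2$ and $1$ on the other side, i.e. the ``cherry plus an edge'' $K_{1,2}$ together with a disjoint $K_2$ sharing... wait, more usefully: $G$ is $H_4$-free exactly when for any two vertices $u,u'$ in the top part with $N(u)\cap N(u')\ne\emptyset$, we have $N(u)\subseteq N(u')$ or $N(u')\subseteq N(u)$. (If neither containment held there would be $v\in N(u)\setminus N(u')$, $w\in N(u)\cap N(u')$, $x\in N(u')\setminus N(u)$, and those four vertices together with $u,u'$ induce $H_4$.) So the ``intersection graph'' on $U$ is a disjoint union of cliques, and within each clique the neighbourhoods are linearly ordered by inclusion; vertices in different cliques have disjoint neighbourhoods. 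Hence $G$ decomposes into vertex-disjoint pieces, each of which is a \emph{nested} bipartite graph (a union of bicliques $U_1\supseteq U_2\supseteq\cdots$ over nested neighbourhoods... actually the top side is partitioned and the bottom sides are nested), plus possibly isolated top vertices and isolated bottom vertices.

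Next I would set up the extremal balancing. Call the nested pieces $B_1,\dots,B_m$ with top parts of sizes $a_i$ (disjoint, summing to at most $n$) and bottom supports of sizes $b_i$ (disjoint, summing to at most $n$); let $a_0$ be the number of isolated top vertices and $b_0$ the number of isolated bottom vertices. Inside a single nested piece $B_i$, choosing the $k$ top vertices with largest neighbourhoods gives a biclique with bottom side of size equal to the smallest of those neighbourhoods; choosing the $k$ top vertices with smallest neighbourhoods and deleting their union of neighbourhoods from $V$ gives a co-biclique. So a single piece already yields $\tilde h(G)\ge$ roughly $\max$ over a threshold of (biclique value, co-biclique value), and one shows a single nested piece on a top part of size $a$ and support of size $b$ contributes a homogeneous structure of size at least something like $\min$ of $a$ and the relevant ``staircase'' quantity; the worst case is a full staircase (a half-graph), which still gives $\Omega$ of its size. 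The real game is combining pieces: bundling several pieces' top parts against the complement of their (disjoint, hence additive) bottom supports produces a co-biclique, exactly as in the $P_4$ and $2K_2$ proofs above. I would run the same ``group the pieces into balanced parts'' argument: either some piece/biclique is already large ($\ge 2n/5$), or all pieces are small and I greedily accumulate top parts until their union first exceeds a target threshold $t$, getting a union of size between $t$ and $2t$ (by smallness of each piece), whose disjoint bottom support then has size $<2(n-t)$-ish, and the co-biclique with the leftover bottom vertices has both sides $\ge$ the target. Optimizing the threshold against the staircase/biclique alternative is what produces the constant $2/5$ rather than $1/3$ or $1/2$.

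The main obstacle is pinning down the right threshold and handling the half-graph (staircase) case cleanly, since that is the case where neither a big biclique nor a big simple co-biclique is immediate and one must trade off ``take the low-degree half for a co-biclique'' against ``take the high-degree half for a biclique'' — this is precisely where $n/5$ and $2n/5$ enter, and it is the step that must be done with care about which part is the top and which is the bottom. Finally I would exhibit the matching construction: for $n\equiv 0\pmod 5$, take two disjoint copies of a half-graph-type gadget sized so that every biclique and every co-biclique is capped at $2n/5$ — concretely something like a biclique $K_{2n/5,\,n/5}$ together with a complementary-sized nested/staircase block and the requisite isolated vertices to pad both sides to $n$ — and verify it is $H_4$-free (each top vertex lies in a nested piece, so the intersection graph is a union of cliques with nested neighbourhoods) and that $\tilde\omega(G)=\tilde\alpha(G)=2n/5$.
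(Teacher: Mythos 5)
Your structural characterization of $H_4$-freeness is wrong, and the whole plan rests on it. Since $H_4$ consists of a degree-$2$ vertex together with an isolated vertex in one part (and two degree-$1$ vertices in the other), forbidding it respecting sides says precisely that $|N(u)\setminus N(u')|\le 1$ for all $u,u'\in U$; it does \emph{not} say that intersecting neighbourhoods are nested. The configuration you exhibit ($v\in N(u)\setminus N(u')$, $w\in N(u)\cap N(u')$, $x\in N(u')\setminus N(u)$) induces only copies of $P_4$ and $2K_2$ on $2\times 2$ subsets, never $H_4$: for instance the two neighbourhoods $\{v_1,v_2\}$ and $\{v_2,v_3\}$ intersect and are incomparable, yet that graph is $H_4$-free. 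Conversely, your ``nested pieces'' are not $H_4$-free: if $N(u')\subsetneq N(u)$ with $|N(u)\setminus N(u')|\ge 2$, then $u,u'$ and two vertices of $N(u)\setminus N(u')$ induce $H_4$, and an isolated top vertex together with any top vertex of degree at least $2$ also gives a copy --- which in addition ruins your proposed extremal construction (``pad with isolated vertices''). So the laminar decomposition into bicliques with disjoint bottom supports neither follows from nor implies $H_4$-freeness, and the balancing argument you then run, modelled on the $P_4$ and $2K_2$ cases, analyses the wrong class of graphs.

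The correct route, which is the one the paper takes, starts from $|N(u)\setminus N(u')|\le 1$ and shows there is a set $X\subseteq V$ such that, in $G$ or in its bipartite complement, every top neighbourhood satisfies $N(u)\setminus\{v(u)\}\subseteq X\subseteq N(u)$ for some single vertex $v(u)$; that is, the neighbourhoods form a sunflower with core $X$ and petals of size at most one. If $|X|\ge 2n/5$ then $(U,X)$ is already a biclique of the required size; otherwise $Y=V\setminus X$ has $|Y|\ge 3n/5$ and $G[U,Y]$ is a disjoint union of stars with centres in $Y$, and taking the $2n/5$ centres of smallest degree either gives a co-biclique with $2n/5$ vertices on each side or forces every remaining centre to have degree at least $2$, whence $|N(Y)|>n$, a contradiction. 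The tight example likewise has no isolated top vertices: a set $V_1$ of size $2n/5$ complete to all of $U$, together with matchings from the two parts of $U$ into the remaining $3n/5$ bottom vertices. You would need to replace your structural lemma by this sunflower statement before any of the subsequent steps can be salvaged.
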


\begin{proof}
Let $G$ have top part $U$ and bottom part $V$ of size $n$ each, assume $n$ is divisible by $5$. Denote by $G'$ the bipartite complement of $G$.  First observe, that $|N(u) \setminus N(u')| \le 1$,  for any  $ u, u' \in U$. \\
%Suppose otherwise. Then we have vertices $u, u' \in U$, $v, v'$ s.t. $v,v' \in N(u)\setminus N(u') $, but then$ \{u, u', v, v' \} $ induces $H_4$. 

\noindent
{\it Claim}: There is  a set $X \subset V$ such that for a graph $Q$ that is either $G$ or its bipartite complement, for all $u \in U$ we have $N_{Q} (u)  - v \subseteq X \subset N_Q(u)$ for some $v=v(u) \in V$. I.e., the neighborhoods of the vertices from $U$ form a sunflower set system with petals of sizes at most one.\\
%	\begin{itemize}
%		\item $X \subset N_G(u)$  and $|N_G(u) \setminus X| \le 1$ or 
%		\item $X \subset N_{G'}(u)$  and $|N_{G'}(u) \setminus X| \le 1$.
%	\end{itemize}

	To prove the Claim, consider a vertex  $u' \in U$ of largest degree.  If  $N(u) \subseteq  N(u')$ for each $u \in U$, let $X= V \setminus N(u')$. Then $X$ satisfies the conditions of the Claim with $Q=G'$. So, we assume that there is a vertex $v''\in N(u'')\setminus N(u')$ for some vertex $u''\in U$.   In particular $d(u'')=d(u')$. Let $V'= N(u')\cap N(u'')$, $V'' = N(u')\cup N(u'')$.  We see that for each $u\in U$, either $V'\subseteq N(u)$ or $N(u)\subseteq V''$.

	 If for all vertices $u\in U$, $N(u)\subseteq V''$, then $X= V-V''$ satisfies the conditions of the Claim with $Q=G'$. If for all $u\in U$, $V'\subseteq N(u)$, then the Claim is satisfied with $X=V'$ and $Q=G$. 
If there are  $u_1, u_2$ such that $u_1, u_2\not\in \{u', u''\}$, $V'\not\subseteq N(u_1)\subseteq V''$ and $V'\subseteq N(u_2)\not\subseteq V''$, we see that $u_1, u_2, v_1, v_2$ form a copy of $H_4$, where $v_1, v_2 $ is the symmetric difference of $N(u')$ and $N(u'')$. This proves the Claim. \\

	\noindent
	Now that we proved the Claim, it remains to find a large biclique or (co-)biclique. Assume without  loss of generality that $Q=G$ in the Claim.
	If $|X| \geq 2n/5$,  then $(U,X)$ induces a biclique with parts of sizes at least  $2n/5$.  Assume that  $|X| < 2n/5$. Let $Y = V \setminus X$.  
	We see that $(U, Y)$ induces a pairwise disjoint union of stars in $G$ with centers in $Y$. Let $s= |Y|$, note that $s\geq 3n/5$.
	Let $Y = \{y_1,\ldots, y_{s}\}$ such that  $d(y_i) \le d(y_{i+1}) $, $i=1,\ldots, s-1$.
	Let $Y_1 = \{y_1, \ldots, y_{2n/5}\}$. If $|N(Y_1)|\leq 3n/5$, then $(Y_1, U\setminus N(Y_1))$ induces a co-biclique with parts of sizes at least $2n/5$. If $|N(Y_1)|>3n/5$, then $d(y_i)\geq 2$ for all $i>2n/5$. Thus $|N(Y)| \geq |N(Y_1)| + 2|Y-Y_1|> 3n/5 + 2\cdot (3n/5-2n/5)= n$, a contradiction since $N(Y)\subseteq U$.\\

%	
%	 If $d(y_1)+ \cdots +d(y_{2n/5}) \leq 3n/5$, then $\{y_1, \ldots, y_{2n/5}\}$
%	
%	
%	We want to show, that $\widetilde{\alpha}(U,Y) \ge \frac{2n}5$:
%	
%	Let $Y = \{y_1,\ldots, y_{s}\}$ be an ordering of the vertices s.t. $d(y_i) \le d(y_{i+1}) $, $i=1,\ldots, s-1$.   We know that 	$\sum_{i=1}^{s} d(y_i) \le n$, and we want to show, that $\sum_{i=1}^{\frac {2n}{5}} d(y_i) \le \frac {3n}5$. 
%	
%	Suppose otherwise. If we had $d(y_{\frac{2n}5}) \le 1$, we would also have $\sum_{i=1}^{\frac {2n}{5}} d(y_i) \le \frac{2n}{5} $, thus we must have $d(y_{\frac {2n}5}) \ge 2$. But then we have $\sum_{i=2n/5 +1}^{s} d(u_i) > 2\cdot\frac n5$ since $s \ge \frac{3n}{5}$, and thus, $\sum_{i=1}^{s} d(y_i) > \frac {3n}{5} + \frac{2n}5 = n$, a contradiction. 
%	
%	Thus, we have an empty bipartite graph between the vertices $\{y_1,\ldots,y_{2n/5} \}$ and $\frac {2n}5$ vertices in $U$, which are adjacent to none of those. 
%	
%	For the second case, we can apply the same arguments in $G'$. Note, that $\widetilde{\omega}(G) = \widetilde{\alpha}(G')$ and $\widetilde{\alpha}(G) = \widetilde{\omega}(G')$, which concludes the proof of the lower bound. \\

	To show that the bound is tight,  construct the following graph $G$ with parts $U$ and $V$ of sizes $n$, $n \equiv 0 \pmod 5$.
	Let $U=U_1\cup U_2$ where $|U_1|=2n/5$ and $|U_2|= 3n/5$.  Let $V= V_1\cup V_2\cup V_3$, where $|V_1|=|V_2|=2n/5$ and 
	$|V_3| = n/5$. Let $G$ have all edges between $V_1$ and $U$, form a perfect matching between $U_1$ and $V_2$, and form a perfect matching between $U_2$ and $V_2\cup V_3$. Note that if $G$ has a copy of $H_4$, this copy has a vertex $u$ of degree $2$ in $U$. Thus, this copy must have a neighbor of $u$ in $V_1$, that in turn is adjacent to all of $U$ and thus could not have degree $1$ in a copy of $H$. Thus $G$ is $H_4$-free.  In addition, we see that $\tih(G) = 2n/5$.
\end{proof}

%%%%%%%%%%%%%%%%%%%%%%%%%%%%%%%%%%%%%%%%%%%%%%%%%%
%%%%%%%%%%%%%%%%%%%%%%%%%%%%%%%%%%%%%%%%%%%%%%%%%%
%%%%%
%%%%%
\section{General bounds}\label{general}
%%%%%
%%%%%
%%%%%%%%%%%%%%%%%%%%%%%%%%%%%%%%%%%%%%%%%%%%%%%%%%
%%%%%%%%%%%%%%%%%%%%%%%%%%%%%%%%%%%%%%%%%%%%%%%%%%

In this section we work out known arguments for completeness.

\begin{theorem}
Let $H$ be a bipartite graph that is not strongly acyclic. Then there is an $\epsilon >0$ such that for each  sufficiently large $n$, 
$\tih(n,H) \leq n^{1-\epsilon}$.  Moreover, if $H$ or its bipartite complement contains $C_4$, $C_6$, or $C_8$, 
then $\epsilon$ could be taken any positive real strictly less than $1/3,  1/6, $ or $1/16$, respectively.
\end{theorem}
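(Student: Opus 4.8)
The plan is to reduce to the three concrete cases $C_4$, $C_6$, $C_8$ and then, in each case, to exhibit a single graph in $\Forb(n,H)$ with small $\tih$. I would begin with three easy observations. First, for any bipartite $G$ with parts $U,V$ one has $\tilde\omega(G)=\tilde\alpha(G')$, so $\tih(G)=\tih(G')$, and $G$ has an induced copy of $H$ respecting sides iff $G'$ has one of $H'$; hence $\tih(n,H)=\tih(n,H')$, and by the last sentence of Theorem~\ref{characterization} we may assume, after possibly replacing $H$ by $H'$, that $C_{2k}\subseteq H$ as a subgraph for some $k\in\{2,3,4\}$. Second, if a bipartite graph $G$ contains no $C_{2k}$ at all (as a subgraph, sides ignored), then $G$ contains no induced copy of $H$, so $G\in\Forb(n,H)$. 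Thus it suffices, for every large $n$, to produce one $C_{2k}$-free bipartite graph with both parts of size $n$ and $\tih$ at most $n^{1-\epsilon}$.

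For the construction I would use incidence (Levi) graphs of generalized polygons, the standard extremal $C_{2k}$-free objects: the incidence graph of a projective plane $PG(2,q)$ when $2k=4$ (girth $6$), of a generalized quadrangle of order $(q,q)$ when $2k=6$ (girth $8$), and of a generalized hexagon of order $(q,q)$ when $2k=8$ (girth $12$). Call this graph $G_q$: it is $(q+1)$-regular, its two parts have the same size $m(q)=1+q+\dots+q^{d-1}$ with $d\in\{3,4,6\}$, and it is $C_{2k}$-free since its girth exceeds $2k$. As prime powers are dense, for any $n$ one can pick $q$ with $n\le m(q)=(1+o(1))n$ and then delete $m(q)-n$ points and the same number of lines; passing to this induced subgraph keeps it $C_{2k}$-free and can only decrease $\tih$, so $\tih(n,H)\le\tih(G_q)$. (The ``standard probabilistic argument'' — take $G(N,N,p)$ with $p$ near the $C_{2k}$-threshold, delete one vertex from each copy of $C_{2k}$, and restrict to $n\approx N/2$ vertices per side — already yields $\tih(n,H)\le n^{1-\epsilon}$ for \emph{some} $\epsilon>0$, covering the first assertion.)

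Three properties of $G_q$ then have to be checked. (i) $C_{2k}$-freeness: immediate from the girth. (ii) $\tilde\omega(G_q)=1$: a $K_{2,2}$ would be a $C_4$, excluded since the girth is at least $6$. (iii) $\tilde\alpha(G_q)\le n^{1-\epsilon}$: this is the core, and it is a double counting estimate. A co-biclique of size $t$ is a set $A$ of $t$ points together with a set $B$ of $t$ lines each of which misses $A$; writing $f(A)$ for the number of lines meeting $A$, this forces $f(A)\le m(q)-t$. Now $\sum_\ell|\ell\cap A|=t(q+1)$ and $\sum_\ell|\ell\cap A|^2=t(q+1)+2P$, where $P$ counts the collinear (distance-$2$) pairs in $A$ — the count is exact precisely because no two points lie on two common lines (girth $\ge6$). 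Cauchy--Schwarz gives $f(A)\ge t^2(q+1)^2/\big(t(q+1)+2P\big)$, and bounding $P$ (a point has only $q^2+q$ points at distance $2$, and the girth further restricts how many can lie in $A$) converts $f(A)\le m(q)-t$ into a bound $t\le q^{a}$ with $a<d$. Since $m(q)\asymp q^{d}$, this reads $\tilde\alpha(G_q)\le n^{1-\epsilon}$, and carrying this through in the three cases $d=3,4,6$ is what is meant to yield the thresholds $\epsilon<1/3$, $\epsilon<1/6$, $\epsilon<1/16$. Together with (ii), $\tih(n,H)\le\tih(G_q)=\tilde\alpha(G_q)\le n^{1-\epsilon}$.

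The main obstacle is step (iii), and behind it a genuine tension: to forbid $C_{2k}$ the graph must be sparse, yet a sparse bipartite graph automatically contains large co-bicliques, so one cannot buy room by adding edges. The way out is to use the \emph{densest} $C_{2k}$-free graphs available — the polygon incidence graphs realize $\Theta(n^{1+1/k})$ edges — where $(q+1)$-regularity together with the girth force every moderately large point set to be met by almost every line, which is exactly what caps $\tilde\alpha$. Extracting the precise constants $1/3$, $1/6$, $1/16$ (as opposed to merely \emph{some} admissible $\epsilon$) is the only truly computational part of the proof: a case-by-case optimization of the Cauchy--Schwarz estimate above, mirrored in the probabilistic variant by optimizing $p$ against the expected number of copies of $C_{2k}$.
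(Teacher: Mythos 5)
Your reduction to exhibiting, for each $k\in\{2,3,4\}$, a single $C_{2k}$-free bipartite graph with small $\tih$ is fine (it matches the paper's reduction via Theorem \ref{characterization}, and the first assertion of the theorem does follow from the standard deletion/probabilistic argument you mention in passing). The genuine gap is in step (iii), and for the $C_4$ case the chosen construction cannot deliver the stated constant at all. Your own Cauchy--Schwarz/double count for the projective plane gives only the following: with $|A|=|B|=t$ incidence-free, counting pairs of lines of $B$ through points outside $A$ (or bounding $f(A)$ as you do) yields $t^2+2tq\le q^3+q^2+q$, i.e.\ $t=O(q^{3/2})=O(n^{3/4})$ --- not $O(n^{2/3})$. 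Worse, this is not a weakness of the analysis but of the graph: the Levi graph of $PG(2,q)$ really does contain balanced co-bicliques of size $\Theta(q^{3/2})$ (for instance, take a maximal arc of degree $\sqrt q$ in $PG(2,q)$, $q$ an even square, as the point set, and the $\approx q^{3/2}$ lines disjoint from it as the line set; this matches the expander-mixing upper bound, cf.\ De Winter--Schillewaert--Verstra\"ete on incidence-free sets). So the plane construction caps at $\epsilon\le 1/4$ and cannot give every $\epsilon<1/3$; sparse pseudo-random graphs with density $\approx n^{-2/3}$, far below the $C_4$-extremal density, are strictly better for this problem, which is exactly why the paper invokes Caro--Rousseau's Local Lemma construction rather than planes. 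In the generalized quadrangle and hexagon cases your double count is also not closed as written: girth alone does not bound the number $P$ of collinear pairs inside $A$ (the point set $A$ could be a union of point-perps, making $P=\Theta(tq^2)$ and the Cauchy--Schwarz bound vacuous); to get a uniform bound one needs the spectral gap, i.e.\ the expander mixing lemma with the known second eigenvalues of these incidence graphs, which gives $\tilde\alpha=O(n^{5/6})$ for the quadrangle (matching the paper's $\epsilon<1/6$) and roughly $O(n^{9/10})$ for the hexagon (in fact better than the paper's $\epsilon<1/16$).

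For comparison, the paper's proof is a single two-colour Lov\'asz Local Lemma argument applied directly to $K_{n,n}$: colour each pair red with probability $p=n^{-1+\epsilon(1+\epsilon')}$, declare a bad event for every red $C_{2k}$ and every blue $K_{t,t}$ with $t=n^{1-\epsilon}$, and verify the LLL conditions; the red graph then has no $C_{2k}$ (hence no induced copy of $H$) and its bipartite complement has no $K_{t,t}$, and the case $C_4$ is quoted from Caro--Rousseau. If you want to salvage an explicit, construction-based proof, you could keep the quadrangle and hexagon (with the mixing lemma supplying step (iii)), but for $C_4$ you must either settle for $\epsilon<1/4$ or switch to a random/LLL construction to reach every $\epsilon<1/3$.
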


\begin{proof}
First, recall from Theorem \ref{characterization}, that if $H$ is not strongly acyclic, then $H$ or its bipartite complement contains $C_4, C_6$, or $C_8$.  In case of $C_4$, we know by a result of Caro and Rousseau  \cite{CR} that there is a bipartite graph $G$  with parts of size $n$ each that does not contain $C_4$ and such that $\talp(G)=O(n^{2/3})$. This result was shown using Lov\'asz Local Lemma that we abbreviate as  LLL.
The LLL tells us that if there are bad events $A_i, \ldots$ and positive numbers $x_i, \ldots$ associated with these events  such that 
$Prob(A_i) \leq (1-x_i) \prod_{j\sim i} x_j$, where $i\sim j$ iff $A_i$ is adjacent to  $A_j$ in the dependency graph, then  with positive probability none of the bad events happen. \\

We use the same approach to randomly create respective graphs with no $C_6$ and with no $C_8$ and not having large co-cliques.
Consider $K_{n,n}$ and color each edge red with probability $p$ and blue with probability $(1-p)$. 
For a specific $C_6$, we say that there is a red bad event if this $C_6$ is red. Similarly, for a specific $K_{t,t}$, we say that there is  a blue bad event if this $K_{t,t}$ is blue. 
We shall use LLL to prove that with positive probability there are no bad events. We say  A "depends" on B when an event A is adjacent to B in the dependency graph.

\begin{itemize}
\item[-]
The probability of a red bad event is $p^6$. The probability of a blue bad event is $(1-p)^{t^2}$.
\item[-]
Each red bad event depends on at most $6 \binom{n}{2}^2 4 \leq 6n^4$ other red bad events (this corresponds to the number of $C_6$'s sharing an edge with a given $C_6$).
\item[-]
Each red bad event depends on at most $6 \binom{n}{t-1}^2 \leq n^{2t}$ blue bad events (this corresponds to the number of $K_{t,t}$'s sharing an edge with a given $C_6$).
%n^{2t-2} really but I don't think it helps -ct
\item[-] 
Each blue bad event depends on $t^2\binom{n}{2}^2 4\leq t^2n^4$ red bad events (this corresponds to the number of $C_6$'s sharing an edge with a given $K_{t,t}$).
\item[-]
Each blue bad event depends on $t^2\binom{n}{t-1}^2 4\leq  n^{2t}$ blue bad events (this corresponds to the number of $K_{t,t}$'s sharing an edge with a given $K_{t,t}$).
\end{itemize}

\noindent
Since here we have bad events of two types, let $x_i= x$  for red bad events and $x_i=y$ for blue bad events. We shall assign the values to $p, t, x, $ and $y$ such that 
$$p^6 \leq (1-x) x^{6n^4} y^{n^{2t}} ~ \mbox{   and ~}~ (1-p)^{t^2} \leq (1-y) x^{n^2t^2} y^{n^{2t}}.$$

%Let $\epsilon$ be a small positive constant, say $\epsilon<1/100$.
%Let $$t= n^{1-\epsilon}, ~ x= 1- n^{-5}, ~ y = 1 - n^{-2n^{1-\epsilon}}, ~ p = n^{-1+3\epsilon}.$$
%
%We shall be using the fact that $(1-s) \approx  e^{-s}$ for small $s$. Then, for large $n$ we have
%%formally 1-s \le e^-s \le 1-s + O(s^2) if it matters -ct
%
%\begin{eqnarray*}
%p^6 & \approx & n^{-6+ 18 \epsilon},\\
%(1-p)^{t^2} & \approx & e^{-n^{-1+3\epsilon} \cdot n^{2-2\epsilon}} = e^{-n^{1+\epsilon}},\\
%(1-x) x^{6n^4} y^{n^{2t}}& \approx & n^{-5} e^{-n^{-5}6n^4} e^{-1} \geq  e^{-1} n^{-5},\\
%(1-y) x^{n^2t^2} y^{n^{2t}} & \approx & n^{-2n^{1-\epsilon}}  e^{-n^{-5}n^2n^{2-2\epsilon}} e^{-1} \geq e^{-1}n^{-2n^{1-\epsilon}}.
%\end{eqnarray*}
%
%Thus 
%$$p^6 \leq (1-x) x^{6n^4} y^{n^{2t}} ~ \mbox{   and ~}~ (1-p)^{t^2} \leq (1-y) x^{n^2t^2} y^{n^{2t}}.$$
%
%********

\noindent
Let $\epsilon, \epsilon'$ be small positive constants, say $\epsilon<1/6$, $\epsilon'<1/(6\epsilon)-1.$  Let $$t= n^{1-\epsilon}, ~ x= 1- n^{-5}, ~ y = 1 - n^{-2n^{1-\epsilon}}, ~ p = n^{-1+ \epsilon (1+ \epsilon')}.$$

\noindent
We shall be using the fact that $(1-s) \approx  e^{-s}$ for small $s$. Then, for large $n$ we have
%formally 1-s \le e^-s \le 1-s + O(s^2) if it matters -ct

\begin{eqnarray*}
p^6 & \approx & n^{-6+ 6 \epsilon(1 +\epsilon')},\\
(1-p)^{t^2} & \approx & e^{-n^{-1+\epsilon (1+ \epsilon')} \cdot n^{2-2\epsilon}} = e^{-n^{1-\epsilon(1-\epsilon')}},\\
(1-x) x^{6n^4} y^{n^{2t}}& \approx & n^{-5} e^{-n^{-5}6n^4} e^{-1} \geq  e^{-1} n^{-5},\\
(1-y) x^{n^4t^2} y^{n^{2t}} & \approx & n^{-2n^{1-\epsilon}}  e^{-n^{-5}n^4n^{2-2\epsilon}} e^{-1}.
\end{eqnarray*}

\noindent
Thus $p^6 \leq (1-x) x^{6n^4} y^{n^{2t}} ~ \mbox{   and ~}~ (1-p)^{t^2} \leq (1-y) x^{n^2t^2} y^{n^{2t}}.$
Therefore, by the LLL there is an edge-coloring of $K_{n,n}$ with no red $C_6$'s and no blue $K_{n^{1-\epsilon}, n^{1-\epsilon}}$.

\vskip 1 cm 
To see the result for $C_8$, we closely follow the  above argument and   choose the parameters similarly and define red bad event corresponding to a red $C_8$ and blue bad event as before.

\begin{itemize}

\item[-]
The probability of a red bad event is $p^8$. The probability of a blue bad event is $(1-p)^{t^2}$.

\item[-]
Each red bad event depends on at most $8 \binom{n}{3}^2 c  \leq Cn^6$ other red bad events (this corresponds to the number of $C_8$'s sharing an edge with a given $C_8$).

\item[-]
Each red bad event depends on at most $8 \binom{n}{t-1}^2 \leq n^{2t}$ blue bad events (this corresponds to the number of $K_{t,t}$'s sharing an edge with a given $C_8$).

\item[-]
Each blue bad event depends on $t^2\binom{n}{3}^2 c\leq  Ct^2n^6$ red bad events (this corresponds to the number of $C_8$'s sharing an edge with a given $K_{t,t}$).

\item[-]
Each blue bad event depends on $t^2\binom{n}{t-1}^2 4\leq  n^{2t}$ blue bad events (this corresponds to the number of $K_{t,t}$'s sharing an edge with a given $K_{t,t}$).
\end{itemize}

\noindent
Let $\epsilon, \epsilon'$ be a small positive constants, say $\epsilon<1/16$, $\epsilon' < 1/(16\epsilon)-1$.
Let $$t= n^{1-\epsilon}, ~ x= 1- n^{-7.5}, ~ y = 1 - n^{-2n^{1-\epsilon}}, ~ p = n^{-1+\epsilon(1+\epsilon')}.$$

\noindent
Then, for large $n$ we have
\begin{eqnarray*}
p^8 & \approx & n^{-8+ 8 \epsilon(1+\epsilon')},\\
(1-p)^{t^2} & \approx & e^{-n^{-1+\epsilon (1+ \epsilon')} \cdot n^{2-2\epsilon}} = e^{-n^{1-\epsilon(1-\epsilon')}},\\
(1-x) x^{Cn^6} y^{n^{2t}}& \approx & n^{-7.5} e^{-n^{-7.5}Cn^6} e^{-1},\\
(1-y) x^{Cn^6t^2} y^{n^{2t}} & \approx & n^{-2n^{1-\epsilon}}  e^{-Cn^{-7.5}n^6n^{2-2\epsilon}} e^{-1}.
\end{eqnarray*}

\noindent
Thus  $p^8 \leq (1-x) x^{Cn^6} y^{n^{2t}} ~ \mbox{   and ~}~ (1-p)^{t^2} \leq (1-y) x^{n^6t^2} y^{n^{2t}}.$
Therefore, by the LLL there is an edge-coloring of $K_{n,n}$ with no red $C_8$'s and no blue $K_{n^{1-\epsilon}, n^{1-\epsilon}}$. \\

Now, let $H$ be a bipartite graph containing $C_{2k}$, $k\in \{2,3,4\}$. 
Consider an edge-coloring of $K_{n,n}$ with no red $C_{2k}$ and no blue $K_{n^{1-\epsilon}, n^{1-\epsilon}}$. 
Let $G$ be a graph formed by the red edges. Then $G$ does not contain $C_{2k}$ and thus does not contain $H$, that implies that it does not contain an induced copy of $H$. In particular $G$ does not have $K_{4,4}$. 
On the other hand the bipartite complement of $G$ does not contain $K_{n^{1-\epsilon}, n^{1-\epsilon}}$. Thus, for sufficiently large $n$, 
$\tih(G) \leq n^{1-\epsilon}$.
\end{proof}

\begin{theorem}\cite{EHP}\label{thm:EHP}
%	Let $H = (X\cup Y, E_H)$ be a bipartite graph with parts $X = \{x_1,\ldots,x_l\}$, $Y = \{y_1,\ldots,y_k\} $ with $2 \leq k \leq l$, let $n_0 = l^{k-1}$. \\
Let $H$ be a bipartite graph with  parts of sizes $k$ and $l$, $2\leq k\leq l$.  Let $G$ be a bipartite graph with parts of sizes $n$, $n\geq l^{k}$.
Then either $G$ is $H$-free or $\tih(G) \geq t$, where $t = \lfloor (\frac nl)^{1/k} \rfloor $.
%	Let $G=(U \cup V, E_G) $ be a bipartite graph with $|U|=|V| = n > n_0$. Then $G$ satisfies at least one of the following  
%	\begin{itemize}
%		\item $ G$ contains $H$ as an induced subgraph,
%		\item $\widetilde{\alpha}(U,V) \geq t$,
%		\item $\widetilde{\omega}(U,V) \geq t$,
%	\end{itemize}
%	 where $t = \lfloor (\frac nl)^{1/k} \rfloor $.
\end{theorem}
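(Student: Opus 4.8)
It suffices to prove the following: if $G=((U,V),E)$ with $|U|=|V|=n\ge l^{k}$ satisfies $\tih(G)<t$ — i.e.\ $G$ contains neither $K_{t,t}$ nor $\overline{K}_{t,t}$, an all‑non‑edge $t\times t$ pattern — then $G$ contains an induced copy of $H$ respecting sides. Write the parts of $H$ as $X=\{x_1,\dots,x_k\}$ and $Y$ with $|Y|=l$; I will embed $X$ into $U$ and $Y$ into $V$. Encode each $y\in Y$ by its \emph{type} $\tau(y)\in\{0,1\}^{k}$, where $\tau(y)_i=1$ iff $x_iy\in E(H)$; let $T$ be the set of occurring types (so $|T|\le\min(2^{k},l)$) and let $m_\tau$ be the number of $y\in Y$ of type $\tau$, so $\sum_{\tau\in T}m_\tau=l$.

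The plan is to pick $u_1,\dots,u_k\in U$ greedily, maintaining after round $i$, for each length‑$i$ prefix $\sigma$ of a type in $T$, the set $D_\sigma\subseteq V$ of all vertices whose adjacency pattern to $(u_1,\dots,u_i)$ equals $\sigma$, with the invariant $|D_\sigma|\ge n/t^{i}$; sets for distinct prefixes of equal length are automatically disjoint. After round $k$ this yields, for each $\tau\in T$, a set $D_\tau$ with $|D_\tau|\ge n/t^{k}\ge l\ge m_\tau$ (using $t^{k}\le n/l$), and since the $D_\tau$ are pairwise disjoint I may choose $m_\tau$ distinct representatives in each; these, together with $u_1,\dots,u_k$, form the desired induced copy of $H$. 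The heart of the matter is round $i$: given the sets $D_\sigma$ over length‑$(i-1)$ prefixes $\sigma$, each of size $\ge n/t^{i-1}=t\cdot n/t^{i}$, I need a vertex $u\in U\setminus\{u_1,\dots,u_{i-1}\}$ with $|D_\sigma\cap N(u)|\ge n/t^{i}$ and $|D_\sigma\setminus N(u)|\ge n/t^{i}$ for every $\sigma$ (only the relevant one of the two inequalities is needed when $\sigma$ has a unique extension in $T$). Were there $t$ vertices $u$ with $|D_\sigma\cap N(u)|<n/t^{i}$, then any $t$ of them would have at least $|D_\sigma|-t(n/t^{i}-1)\ge t$ common non‑neighbours inside $D_\sigma$, i.e.\ a $\overline{K}_{t,t}$; symmetrically, $t$ vertices $u$ with $|D_\sigma\setminus N(u)|<n/t^{i}$ would produce a $K_{t,t}$. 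Hence at most $2(t-1)$ vertices are ``bad'' for a fixed $D_\sigma$, so at most $2(t-1)\min(2^{i-1},l)$ are bad in round $i$; adding the at most $k-1$ already used vertices, a routine computation (using $t\le (n/l)^{1/k}$) shows that $n\ge l^{k}$ always leaves an admissible choice of $u_i$.

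The step I expect to be the real obstacle is precisely this last one: one needs a \emph{single} vertex $u_i$ that simultaneously splits \emph{all} of the (up to $\min(2^{i-1},l)$) currently active candidate sets in a balanced way — the branches of the type tree at depth $i$ cannot be served by different vertices, since $u_i$ plays the role of $x_i$ for every vertex of $Y$ at once — and the counting must be tight enough to land on the exact threshold $t=\floor{(n/l)^{1/k}}$ rather than merely on $\Theta((n/l)^{1/k})$. The remaining ingredients — defining the types, maintaining the size invariant through the $k$ rounds, and reading off the induced copy at the end — are straightforward bookkeeping.
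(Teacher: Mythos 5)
Your argument is correct, and at its core it runs on the same engine as the paper's proof: embed the smaller part of $H$ one vertex at a time, and at each step a single vertex that is good for \emph{all} current candidate sets must exist, because $t$ vertices that are bad for one fixed large candidate set would, together with their at least $t$ common neighbours (resp.\ non-neighbours) inside that set, form a $K_{t,t}$ (resp.\ a co-biclique). The bookkeeping, however, is dual to the paper's. The paper pre-partitions $U$ into $l$ blocks of size $t^k$, one per vertex of the larger part of $H$, and $V$ into $k$ blocks of size at least $tl$; its Lemma~\ref{lem:smartembedding} then finds the good vertex inside the relevant $V$-block by pigeonhole (if every $v$ were bad for some block, one block would be bad for at least $|V|/l\ge t$ vertices). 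You instead keep candidate sets on the side receiving the larger part, indexed by adjacency types to the already chosen vertices (at most $\min(2^{i-1},l)$ of them), choose the images of the smaller part greedily from all of $U$, bound the bad vertices per set by $2(t-1)$ and union-bound, and finally draw $m_\tau$ representatives from each of the pairwise disjoint type classes. The counting you declare routine does go through: one needs $2(t-1)\min(2^{i-1},l)+(i-1)<n$, which follows from $t\le (n/l)^{1/k}$, $n\ge l^k$ and $2\le k\le l$ (AM--GM settles $k=2$, and $k\ge 3$ is crude); so your variant buys freedom from any pre-partition at the price of this slightly more delicate estimate, whereas the paper's disjoint blocks make the existence step immediate. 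Two cosmetic repairs: dispose of the trivial case $t=1$ separately (you need $t\ge 2$ so that both halves of a split can exceed the threshold), and replace the thresholds $n/t^{i}$ by the integers $l\,t^{k-i}$ (legitimate since $n\ge lt^{k}$), so that ``bad'' really means at most $l\,t^{k-i}-1$ neighbours in $D_\sigma$ and the lower bound of $t$ on the common (non-)neighbourhood is exact rather than relying on $n/t^{i}$ being an integer.
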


We need the following Lemma. 
\begin{lemma}\label{lem:smartembedding}
	Let $G$ be an $(l+1)$-partite graph with vertex classes $U_1,\ldots,U_l, V$, $|U_i| \geq t^{m}$, $|V| \geq tl$, for some integers $l, t, m \geq 2$.  Let  $\widetilde{\alpha}(U_i, V)$ and $\widetilde{\omega}(U_i, V)$  denote  $\widetilde{\alpha}$ and  $\widetilde{\omega}$ of the bipartite subgraph of $G$ induced by $(U_i, V)$. Let  $\widetilde{\alpha}(U_i, V) < t, \widetilde{\omega}(U_i, V) < t$ for all $i \in [l]$. 
	Then for any map $f:[l] \to \{0,1\}$, there exists a vertex $v \in V$, such that  
	\begin{align*}
	|N(v) \cap U_i | \geq t^{m-1},\qquad &  f(i) = 1,  \\
	|U_i\setminus N(v) | \geq t^{m-1},\qquad & f(i) = 0.
	\end{align*}
\end{lemma}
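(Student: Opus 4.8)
The plan is to prove Lemma \ref{lem:smartembedding} by a counting (double-counting) argument over the vertices of $V$, exploiting the fact that each bipartite pair $(U_i,V)$ has no large homogeneous pair. Fix $i\in[l]$. Consider the bipartite graph on $(U_i,V)$. Since $\widetilde{\omega}(U_i,V)<t$, there is no biclique of size $t$; I would like to conclude that only few vertices $v\in V$ can have large neighborhood in $U_i$. More precisely, call $v\in V$ \emph{rich for $i$} if $|N(v)\cap U_i|\geq t^{m-1}$, and \emph{poor for $i$} if $|U_i\setminus N(v)|\geq t^{m-1}$. Note every $v$ is rich or poor for $i$ unless $|U_i|<2t^{m-1}$, which is impossible since $|U_i|\geq t^m\geq 2t^{m-1}$ for $t\geq 2$; so in fact every $v\in V$ is rich for $i$ or poor for $i$ (possibly both). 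The goal, given $f$, is to find a single $v$ that is rich for every $i$ with $f(i)=1$ and poor for every $i$ with $f(i)=0$.

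The key step is to bound, for each $i$, the number of vertices of $V$ that are \emph{not} of the type prescribed by $f(i)$, i.e.\ the number of $b_i\in V$ that fail the requirement at coordinate $i$. If $f(i)=1$, a failing vertex $v$ has $|N(v)\cap U_i|<t^{m-1}$, hence (being rich-or-poor) is poor: $|U_i\setminus N(v)|\geq t^{m-1}$. I claim at most $t-1$ such vertices, together with a suitable $t^{m-1}$-subset of $U_i$, would otherwise build a co-biclique of size $t$ — but that is not immediate because different poor vertices can avoid different parts of $U_i$. Instead I would run a greedy/pigeonhole refinement: process the "wrong-type" vertices one at a time, each time passing to the sub-block of $U_i$ that it avoids (if $f(i)=1$) or dominates (if $f(i)=0$); after $j$ steps the surviving block of $U_i$ has size at least $t^{m}\cdot$ (something), and as long as this stays positive we keep a common witness set in $U_i$, so once we collect $t$ wrong-type vertices we obtain a homogeneous pair of size $t$ in $(U_i,V)$, contradicting $\widetilde{\alpha}(U_i,V)<t$ or $\widetilde{\omega}(U_i,V)<t$. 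This shows the number of vertices of $V$ of the wrong type for coordinate $i$ is at most $t-1$. Here the exponent bookkeeping must be arranged (this is presumably why $|U_i|\geq t^m$ rather than $t^{m-1}$): each of the $\leq t-1$ refinements should cost a factor $t$, ending at a block of size $\geq t^{m-(t-1)}$, which forces $m$ large enough — or, more likely, the intended argument only needs one refinement step per vertex against a fixed $t^{m-1}$-threshold and uses $|V|\geq tl$ rather than a large $m$; I would reconcile this by choosing the witness subset of $U_i$ \emph{once} of size $t^{m-1}$ and observing a wrong-type vertex kills it outright, so at most $t-1$ wrong-type vertices exist per $i$ before a size-$t$ homogeneous pair appears.

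Granting the claim, I finish by a union bound over $i\in[l]$: the total number of vertices $v\in V$ that are of the wrong type for \emph{some} coordinate is at most $l(t-1)<lt\leq|V|$. Hence some $v\in V$ is of the correct type for every $i\in[l]$ simultaneously, which is exactly the conclusion: $|N(v)\cap U_i|\geq t^{m-1}$ whenever $f(i)=1$ and $|U_i\setminus N(v)|\geq t^{m-1}$ whenever $f(i)=0$.

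The main obstacle I anticipate is the per-coordinate bound "$\leq t-1$ wrong-type vertices," because naively the wrong-type vertices need not share a common bad sub-block of $U_i$; getting a genuine $K_{t,t}$ or its complement requires a nested/greedy argument that consumes size from $U_i$, and making the arithmetic close with the stated hypothesis $|U_i|\geq t^m$ (as opposed to something like $t^{t}$) is the delicate point. If the clean "one witness set of size $t^{m-1}$" version does not by itself yield a size-$t$ homogeneous structure, the fix is to iterate: restrict to the sub-block avoided/dominated by the first wrong vertex, still of size $\geq t^{m-1}$ if we instead demand $|U_i|\geq t^m$ and only lose a factor $t$ per step, and repeat $t$ times — the hypothesis $|U_i|\geq t^m$ then needs $m\geq t$, or the statement is applied only with $m$ chosen accordingly in its use downstream. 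Either way the structure of the proof is: (i) dichotomy rich/poor, (ii) per-coordinate bound via forbidden homogeneous pairs, (iii) union bound over the $l$ coordinates against $|V|\geq tl$.
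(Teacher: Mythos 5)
Your skeleton (rich/poor dichotomy, a per-coordinate bound of at most $t-1$ ``wrong-type'' vertices, then a union bound against $|V|\geq tl$) is the contrapositive of the paper's argument, which assumes no good $v$ exists and pigeonholes to find one index $j$ that is wrong for at least $|V|/l\geq t$ vertices of $V$. So the overall structure is fine. The genuine gap is exactly the step you flag as ``the main obstacle'': you never prove the per-coordinate bound, and both of your proposed routes to it are off. The nested/greedy refinement you sketch loses a factor $t$ per processed vertex and would force $m\geq t$, which the hypotheses do not give (and the lemma is applied downstream with $m=k$ fixed and $t$ growing like $n^{1/k}$, so that requirement is fatal). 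Your alternative ``choose one witness set of size $t^{m-1}$ and a wrong-type vertex kills it outright'' is not an argument: a single vertex having small neighborhood (or small non-neighborhood) in $U_i$ says nothing about a fixed witness set, and you yourself note that different wrong vertices can avoid different parts of $U_i$.

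The missing idea is that you should not track where the wrong vertices avoid $U_i$, but observe that the side the wrongness condition makes small is small for \emph{each} of them, and then count the union. Concretely, if $f(i)=1$ and $v_1,\dots,v_t$ are wrong for $i$, then $|N(v_r)\cap U_i|\leq t^{m-1}-1$ for each $r$, so
\[
\Bigl|U_i\setminus\bigcup_{r=1}^{t}N(v_r)\Bigr|\;\geq\;|U_i|-t\bigl(t^{m-1}-1\bigr)\;\geq\;t^m-t^m+t\;=\;t,
\]
and these $t$ vertices of $U_i$ together with $v_1,\dots,v_t$ form a co-biclique of size $t$, contradicting $\widetilde{\alpha}(U_i,V)<t$; symmetrically, if $f(i)=0$ the wrong vertices each miss at most $t^{m-1}-1$ vertices of $U_i$, so at least $t$ vertices of $U_i$ are adjacent to all of them, contradicting $\widetilde{\omega}(U_i,V)<t$. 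This one-line count is precisely why the hypothesis is $|U_i|\geq t^m$ rather than anything involving $t^t$; no iteration is needed. With this per-coordinate bound of $t-1$ in hand, your union bound $l(t-1)<tl\leq|V|$ (equivalently, the paper's pigeonhole) finishes the proof.
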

\begin{proof}
	Let $G$ be an $(l+1)$-partite graph as in the statement and fix a function $f:[l] \to \{0,1\}$.  
	We prove the statement by contradiction.\\
	Assume there is no such vertex $v \in V$. Then for every $v \in V$, there must be at least one index $i_v \in [l]$, such that $U_{i_v}$ is \emph{bad} for $v$, meaning that 
	\begin{align*}
	&|N(v) \cap U_{i_v} | \leq t^{m-1} - 1 , &  \text{if } f(i) = 1,  \\
	\text{or }\qquad&|U_{i_v}\setminus N(v) | \leq t^{m-1} -1, & \text{if } f(i) = 0.
	\end{align*}
	Since there are only $l$ sets, we have a set $U_i$, that is bad for $ \frac{|V|}{l} \geq t$ vertices in $V$. Choose $V' \subseteq V$ such that $|V'| = t$ and $i_v = j \ \forall v, w \in V'$. \\
	We now distinguish two cases: \\
	Case 1: $f(j) = 0$. Consider the subset $U' \subseteq U_j$ of vertices, that are adjacent to all vertices in $V'$, so $U' = \{u \in U_j\ |\ uv \in E(G)\ \forall v \in V' \}$. Since every vertex in $V'$ is non-adjacent to at most $t^{m-1} -1$ vertices, we obtain $|U'| \geq |U_j| - t(t^{m-1} -1) \geq t^m - (t^m - t) \geq t$. Thus, the pair $(U',V')$ contains a copy of $K_{t,t}$, which contradicts our assumption of $\widetilde{\omega}(U_j, V) < t$. 	 \\
	Case 2: $f(j) =1$. Consider the subset $U' \subseteq U_j$ of vertices, that have no neighbour in $V$, so $U'  =\{u \in U_j\ |\ uv \not\in E(H)\ \forall v \in V'\}$. Since every vertex in $V'$ is adjacent to at most $t^{m-1} -1$ vertices, we obtain $|U'| \geq |U_j| - t(t^{m-1} -1) \geq t^m - (t^m - t) \geq t $. Thus, the pair $(U',V')$ contains an empty bipartite graph of size $t$, which contradicts our assumption of $\widetilde{\alpha}(U_j, V) < t$.\\
	Hence, for every $f$ we find a vertex $v$, which is good for all sets $U_i$. 	
\end{proof}

\begin{proof}[Proof of Theorem \ref{thm:EHP}]
Let $H = (X\cup Y, E_H)$ be a bipartite graph with parts $X = \{x_1,\ldots,x_l\}$, $Y = \{y_1,\ldots,y_k\} $ with $2 \leq k \leq l$, let $n_0 = l^{k-1}$.  Assume that  $\widetilde{\alpha}(U,V) < t$ and  $\widetilde{\omega}(U,V) < t$. We show how to find an induced copy of $H$. 
Note that from the choice of $t$ and $n$, we have that $n\geq t^k l$ and $t\geq l$.\\

Partition $U$ into $l$ subsets $U_1,\ldots, U_l$, each of size at least $t^k$. Partition $V$
 into $k$ subsets $V_1,\ldots, V_k$  each of size at least $t^k$.  Since $t\geq l$, we have that $|V_i| \geq tl^{k-1}\geq tl$, for all $i \in [k]$.
We shall apply Lemma \ref{lem:smartembedding}   $k$ times to obtain subsets $U_i \supseteq U_i^1\supseteq \cdots \supseteq U_i^k$ such that we can embed $x_i \in U_i^k$ and $y_j \in V_j$ for $i=1,\ldots, l$ and $j = 1,\ldots, k$. \\

\noindent
In step 1, we apply Lemma \ref{lem:smartembedding} to the sets $U_1,\ldots,U_l,V_1$ with $m = k $ and 
	\begin{equation*}
	f_1 : [l] \to \{0,1\}, \ i \mapsto \left\{ 
					\begin{array}{ll}
					1, & x_iy_1 \in E_H \\
					0, & x_iy_1 \not\in E_H
					\end{array}
									\right.,
	\end{equation*}
	to find a vertex $v_1 \in V$ and subsets \begin{equation*}
U_i^1 = \left\{
\begin{array}{ll}
N(v_1) \cap U_i, & x_iy_1 \in E_H \\
U_i\setminus N(v_1), & x_iy_1 \not\in E_H
\end{array}							
\right. , 
	\end{equation*} such that $|U_i^1| \geq t^{k-1}  $ for $i \in [l]$. \\

\noindent	
	Assume that after step $j$ we have subsets $U_1^j,\ldots, U_l^j$, $|U_i^j| \geq t^{k-j} $. \\
	If $j < k$, we apply the Lemma again, to the sets $U_1^j,\ldots, U_l^j, V_{j+1}$ with $ m = k-{j-1} \geq 2 $ and 
	\begin{equation*}
	f_j : [l] \to \{0,1\},\  
	i \mapsto \left\{ 
	\begin{array}{ll}
	1, & x_iy_{j+1} \in E_H \\
	0, & x_iy_{j+1} \not\in E_H
	\end{array}
	\right.,
	\end{equation*} 
	to find a vertex $v_{j+1} \in V_{j+1}$ and subsets 
	\begin{equation*}
		U_i^{j+1} = \left\{
	\begin{array}{ll}
	N(v_1) \cap U_i^j, & x_iy_{j+1} \in E_H \\
	U_i^j\setminus N(v_1), & x_iy_{j+1} \not\in E_H
	\end{array}							
	\right.,
	\end{equation*}
	 such that $|U_i^{j+1}| \geq t^{(k-j)-1}= t^{k-(j+1)}  $ for $i \in [l]$. \\
	 
	 \noindent
	We finish after $k$ steps, and by our choice of $n, t$, we still obtain $|U_i^k| \geq 1$, $i \in [l]$. Thus, we have found vertices $v_1,\ldots,v_k$ where we can embed $\{x_1,\ldots,x_k\}$ and nonempty sets of candidates $U_i^k$, in which we can embed $Y$.  This concludes the proof.
\end{proof}

\vskip 1cm
\noindent
{\bf Acknowledgments} The authors thank Jacob Fox for bringing the connections between the considered problem and the VC-dimensions of sets systems to their attention. The first author thanks the Department of Mathematics at Stanford University for the hospitality.

\end{document}